\newcommand{\R}{\mathbb R}
\newcommand{\N}{\mathbb N}
\newcommand{\E}{\mathbb E}
\newcommand{\Pro}{\mathbb P}
\newcommand{\Var}{\mathrm{Var}}
\newcommand{\vol}{\mathrm{vol}}
\def\dint{\textup{d}}
\newcommand{\SSS}{\ensuremath{{\mathbb S}}}
\newcommand{\Cov}{\operatorname{Cov}}
\newtheorem{thm}{Theorem}[section]
\newtheorem{lemma}[thm]{Lemma}
\newtheorem{df}[thm]{Definition}
\newtheorem{thmalpha}{Theorem}
\theoremstyle{definition}
\newtheorem{rmk}[thm]{Remark}
\def\bC{\mathbf{C}}
\begin{document}

\title{Thin-shell concentration for random vectors in Orlicz balls via moderate deviations and Gibbs measures}

\medskip

\author{David Alonso-Guti\'errez and Joscha Prochno}



\date{}

\maketitle

\begin{abstract}
\small
In this paper, we study the asymptotic thin-shell width concentration for random vectors uniformly distributed in Orlicz balls. We provide both asymptotic upper and lower bounds on the probability of such a random vector $X_n$ being in a thin shell of radius $\sqrt{n}$ times the asymptotic value of $n^{-1/2}\left(\E\left[\Vert X_n\Vert_2^2\right]\right)^{1/2}$ (as $n\to\infty$), showing that in certain ranges our estimates are optimal. In particular, our estimates significantly improve upon the currently best known general Lee-Vempala bound when the deviation parameter $t=t_n$ goes down to zero as the dimension $n$ of the ambient space increases. We shall also determine in this work the precise asymptotic value of the isotropic constant for Orlicz balls. Our approach is based on moderate deviation principles and a connection between the uniform distribution on Orlicz balls and Gibbs measures at certain critical inverse temperatures with potentials given by Orlicz functions, an idea recently presented by Kabluchko and Prochno in [The maximum entropy principle and volumetric properties of Orlicz balls, J. Math. Anal. Appl. {\bf 495}(1) 2021, 1--19].
\medspace
\vskip 1mm
\noindent{\bf Keywords}. {Central limit theorem, Gibbs measure, isotropic constant, moderate deviation principle, Orlicz space, sharp large deviation estimate, thin-shell concentration.}\\
{\bf MSC}. Primary 46B06, 52A23, 60F10; Secondary 46B09, 46B45, 60F05
\end{abstract}


\section{Introduction and main results}

Already the early years in the local theory of Banach spaces and geometric functional analysis have demonstrated a deep connection between the geometry of finite-dimensional normed spaces and probability theory. Powerful methods have been developed on both sides and new fields at the crossroads of functional analysis, discrete and convex geometry, and probability theory have emerged. Two such fields are asymptotic geometric analysis and high-dimensional probability theory, which overlap in many different ways while still leaning towards the directions the names already indicate. Driving forces behind many research activities in both fields have their origin in applied sciences, for instance, in form of the Kannan-Lov\'asz-Simonovits (KLS) conjecture in theoretical computer science \cite{AGB2015, LV2019_survey}, and applications in other areas of mathematics are manifold (see, e.g., \cite{FR2013, GV2016, HKNPU2019, HPU2019, M2015, V2018} and the references cited therein.).

The past decades have shown the fundamental importance of central limit phenomena for both fields, the most prominent example being arguably the central limit theorem for convex bodies due to Klartag \cite{K2007}, which says that most marginals of an isotropic convex body in high dimensions are close to a Gaussian distribution. Beyond that, various geometric quantities have been shown to follow a central limit theorem as the dimension of the ambient space tends to infinity, e.g., \cite{AGetal2019, APT2019, BKPST2020, GKT2019, JP2019, KLZ15pomi, KPT2019_I, KPT2019_II, PPZ14, R2005, Schmu2001,Stam82, ThaeleTurchiWespi}, and aside from the universality they describe, which no doubt is a beautiful and fascinating property in its own right, those weak limit theorems find applications in different situations, e.g., \cite{AP2015, KPT2019_I, Schmu2001}. What many of those results have in common and what makes their proofs more delicate is that the source of the Gaussian approximation is not attributed to independence, or a weak form of independence, but rather to geometry and more specifically convexity. For instance, pivotal to Klartag's proof of the central limit theorem is the following principle going back to Sudakov \cite{S1978}, Diaconis and Freedman \cite{DF1984}, and von Weizs\"acker \cite{vW1997}, which had been put forward again by Anttila, Ball, and Perissinaki \cite{ABP2003}: an isotropic  random vector $X\in\R^n$, i.e., a centered random vector $X\in\R^n$ with identity covariance matrix, has most marginals approximately Gaussian if $\|X\|_2/\sqrt{n}$ concentrates around $1$, i.e., $\|X\|_2$ concentrates in a thin shell of radius $\sqrt{n}$ and `small' width (see \cite[Theorem 1.4]{K2007} and \cite[Chapter 12]{IsotropicConvexBodies}). This principle has led to the {\it thin-shell width conjecture}, which proposes the existence of an absolute constant $C\in(0,\infty)$ such that for every $n\in\N$ and every isotropic random vector $X\in\R^n$ one has $\E[\Vert X\Vert_2-\sqrt{n}]^2\leq C$.
This conjecture is known to be equivalent (see, for instance, \cite{AGB2013}) to the so-called {\it variance conjecture}, which was formally conjectured in \cite{BK2003} and proposes the existence of an absolute constant $C\in(0,\infty)$ such that for every $n\in\N$ and every isotropic random vector $X\in\R^n$ one has $\Var[\Vert X\Vert_2^2]\leq Cn$.
It has been verified for random vectors uniformly distributed on unconditional bodies \cite{K2009} (see also \cite{ABP2003}, \cite{LW2008}, \cite{S2008}, and \cite{W2007} for previous results on random vectors uniformly distributed on the $\ell_p^n$ balls) as well as for generalized Orlicz balls \cite{MK2018} and random vectors uniformly distributed on the regular simplex \cite{BW2009}. The best general estimate known up to now is due to Lee-Vempala \cite{LV2018}, who proved that any $n$-dimensional isotropic random vector verifies that $\E[\Vert X\Vert_2-\sqrt{n}]^2\leq C\sqrt{n}$, where $C\in(0,\infty)$ is an absolute constant, improving the previous estimate $\E[\Vert X\Vert_2-\sqrt{n}]^2\leq Cn^{2/3}$ given by Gu\'edon-Milman \cite{GM2011}. In the same paper, Lee and Vempala showed that for any isotropic random vector in $\R^n$ and any $t\in(0,\infty)$,
\begin{equation}\label{eq:Concentration Lee-Vempala}
\Pro\left[\left|\frac{\Vert X\Vert_2}{\E[\Vert X\Vert_2]}-1\right|\geq t\right]\leq e^{-c\min\{t,t^2\}\sqrt{n}},
\end{equation}
where $c\in(0,\infty)$ is an absolute constant, thereby improving for small values of $t$ the value of the exponent in $t$ in such a concentration inequality from $t^3$, proven by Gu\'edon and Milman, to $t^2$. More precisely, Gu\'edon and Milman proved that for any isotropic random vector in $\R^n$ and any $t\in(0,\infty)$,
\begin{equation}\label{eq:Concentration Guedon-Milman}
\Pro\left[\left|\frac{\Vert X\Vert_2}{\sqrt{n}}-1\right|\geq t\right]\leq e^{-c\min\{t,t^3\}\sqrt{n}},
\end{equation}
where $c\in(0,\infty)$ is an absolute constant. In the case of the $\ell_p^n$ balls sharper concentration results have been obtained for the $\ell_q$-norm of a random vector uniformly distributed on the $\ell_p^n$ sphere \cite{N2007} and, as explained in \cite{SZ2000}, the estimates translate immediately to random vectors uniformly distributed on  $B_p^n$, the unit ball of $\ell_p^n$. For more details we refer to, e.g., \cite[Section 2]{AGB2018}. In particular, if $p\geq 2$ and $X$ is a random vector uniformly distributed on $B_p^n$, then
\begin{equation}\label{eq:Concentration Schechtamnn-Zinn-Naor-pgeq2}
\Pro\left[\left|\frac{\Vert X\Vert_2^2}{\E[\Vert X\Vert_2^2]}-1\right|\geq t\right]\leq 12e^{-c\min\left\{t,t^2\right\}n},
\end{equation}
where $c\in(0,\infty)$ is an absolute constant and if $1\leq p<2$, then
\begin{equation}\label{eq:Concentration Schechtamnn-Zinn-Naor-leq2}
\Pro\left[\left|\frac{\Vert X\Vert_2^2}{\E[\Vert X\Vert_2^2]}-1\right|\geq t\right]\leq Ce^{-c\psi(n,t)},
\end{equation}
where $c,C\in(0,\infty)$ are absolute constants and $\psi(n,t)$ is a function that takes different forms in different ranges of $t$ with respect to $n$.


Other types of limit theorems, namely moderate and large deviation principles, describing the fluctuations beyond the Gaussian scale, have only recently been obtained for quantities studied in asymptotic geometric analysis and high-dimensional probability theory after their introduction by Gantert, Kim, and Ramanan \cite{GKR2017} and Kabluchko, Prochno, and Th\"ale \cite{KPT2019_II}. Contrary to the universality in a central limit theorem, which comes at the price of information regarding the underlying distribution being lost in the limit, moderate and large deviations are sensitive and typically parametric in view of the underlying random objects, meaning that in a our context they still encode subtle geometric information. While the full strength of this fact regarding applications in asymptotic geometric analysis and high-dimensional probability is yet to figure out, Alonso-Guti\'errez, Prochno, and Th\"ale have recently discovered in \cite{AGPT2021}, using a theorem of Gromov and Milman \cite{GM1987}, a connection between the study of moderate and large deviations for isotropic log-concave random vectors and the famous KLS conjecture (which is stronger than both the thin-shell width and variance conjecture). While most of the initial works on moderate and large deviations in the geometric framework had been restricted to $\ell_p^n$ balls (see, e.g., \cite{APT2018,GKR2016,GKR2017,KPT2019_cube,KPT2019_I,KPT2019_II,KR2018,LR2020}), because of a useful probabilistic representation of Schechtman and Zinn \cite{SchechtmanZinn} which allowed for certain explicit computations to be carried out, this had been overcome by Kabluchko, Prochno, and Th\"ale in their work on Sanov-type large deviations for the Schatten classes \cite{KPT2019_sanov}, and was recently pushed further by Kim, Liao, and Ramanan \cite{KLR2019}. In the updated version of their paper on arXiv they obtained large deviation principles in the general setting of Orlicz balls by using a method similar to the one that has recently been put forward by Kabluchko and Prochno in \cite{KP2020} who studied the asymptotic volumetric properties of Orlicz balls. The approach, as is explained in \cite[Section 1.2]{KP2020}, is based on a connection between random vectors in unit balls of Orlicz spaces and certain Gibbs measures whose potentials are given by the respective Orlicz functions and rests on the maximum entropy principle from large deviations theory and statistical mechanics. In particular, the connection also explains why the probabilistic representation of Schechtman and Zinn is so intimately related to the geometry of $\ell_p^n$ balls. In this paper, we use this connection between the uniform distribution on Orlicz balls and Gibbs measures with Orlicz potentials from \cite{KP2020} to study the concentration of random vectors in Orlicz balls in a thin-shell with radius $\sqrt{n}$ times the asymptotic value of $n^{-1/2}\left(\E\left[\Vert X_n\Vert_2^2\right]\right)^{1/2}$ and obtain in several cases strong and even sharp asymptotic estimates, extending the concentration results in \eqref{eq:Concentration Schechtamnn-Zinn-Naor-pgeq2} from $\ell_p^n$ balls to Orlicz balls when $t=t_n$ goes down to zero. In this situation we exploit the normalization of a random variable on the scale of moderate deviations to obtain our estimates.  The approach also enables us to compute the precise asymptotic value of the isotropic constant for Orlicz balls. Other than the explained connection to Gibbs measures, our proofs are based on moderate and sharp large deviation techniques for sums of independent random variables due to Petrov \cite{P1965, P1975} and Eichelsbacher and L\"owe \cite{EL2003}.

\subsection{The main results}

In order to state the main results of this paper, we first need to introduce some notions and notation. A function $M:\R\to\R$ is called an Orlicz function if $M(0)=0$, $M(t)>0$ for $t\neq 0$, and $M$ is even and convex. For $R\in(0,\infty)$, let us denote by $B_M^n(nR)$ the Orlicz ball
\[
B_M^n(nR) := \Bigg\{x=(x_i)_{i=1}^n \in\R^n\,:\, \sum_{i=1}^n M(x_i) \leq nR \Bigg\}.
\]
Note that if $M(x)=|x|^p$ for $1\leq p<\infty$, then we obtain an $\ell_p^n$ ball of radius $(nR)^{1/p}$.
The isotropic constant of the Orlicz ball $B_M^n(nR)$ is the number $L_{B_M^n(nR)}\in(0,\infty)$ such that
\[
nL_{B_M^n(nR)}^2=\frac{1}{\vol_n(B_M^n(nR))^{1+2/n}}\int_{B_M^n(nR)} \Vert x\Vert_2^2 dx.
\]

For more background on the isotropic constant, we refer the reader to Subsection \ref{subsec:isotropic convex bodies} below and the references provided there.

In what follows, given an Orlicz function $M:\R\to\R$ and a radius $R\in(0,\infty)$, we shall denote by $\varphi_M$ the log-partition function with potential $M$, i.e.,
$$
\varphi_M:(-\infty,0)\to\R,\qquad \varphi_M(\alpha)=\log \int_\R e^{\alpha M(x)}dx,
$$
and by $p_M:\R\to[0,\infty)$ the log-concave Gibbs density with potential $M$, i.e.,
$$
p_M(x)=e^{\alpha_*M(x)-\varphi_M(\alpha_*)}=\frac{ e^{\alpha_* M(x)}}{\int_\R e^{\alpha_* M(x)}dx},\quad x\in\R,
$$
where $\alpha_*\in(-\infty,0)$ is the unique element, in statistical mechanics parlance the (critical) inverse temperature, at which
\[
\varphi_M^\prime(\alpha_*)=\frac{\int_\R M(x)e^{\alpha_*M(x)}dx}{\int_\R e^{\alpha_*M(x)}dx}=R
\]
holds.

Our main results are the following. The first determines the precise asymptotic value of the isotropic constant for Orlicz balls $B_M^n(nR)$.

\begin{thmalpha}\label{thm:isotropic constant orlicz}
Let $M:\R\to[0,\infty)$ be an Orlicz function and $R\in(0,\infty)$.
Then
$$
\lim_{n\to\infty}L_{B_M^n(nR)}=\frac{e^{\alpha_*R}}{\int_\R e^{\alpha_*M(x)}dx}\left(\frac{\int_\R x^2e^{\alpha_* M(x)}dx}{\int_\R e^{\alpha_* M(x)}dx}\right)^{1/2}.
$$
\end{thmalpha}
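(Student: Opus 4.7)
The plan is to decouple the asymptotic analysis of $L_{B_M^n(nR)}$ into a volume piece and a second-moment piece. Writing $X_n$ for a random vector uniformly distributed on $B_M^n(nR)$ and $X_n^{(1)}$ for its first coordinate, the defining relation for the isotropic constant rearranges, using permutation symmetry, as
$$
L_{B_M^n(nR)}^2=\frac{\E\bigl[(X_n^{(1)})^2\bigr]}{\vol_n(B_M^n(nR))^{2/n}}.
$$
Hence Theorem A reduces to the two limit identities (a) $\vol_n(B_M^n(nR))^{1/n}\to e^{-\alpha_*R}\int_\R e^{\alpha_*M(x)}\,dx$ and (b) $\E[(X_n^{(1)})^2]\to\int_\R x^2 p_M(x)\,dx$; a square root and straightforward algebra then produce the right-hand side of Theorem A.

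For (a) I would perform the exponential change of measure that underlies the Gibbs approach of \cite{KP2020}. If $Y_1,\ldots,Y_n$ are i.i.d.\ with density $p_M$ and $Z_{\alpha_*}:=\int_\R e^{\alpha_*M(x)}\,dx$, then $d x_i=Z_{\alpha_*}e^{-\alpha_*M(x_i)}p_M(x_i)\,dx_i$, whence
$$
\vol_n(B_M^n(nR))=Z_{\alpha_*}^n\,e^{-\alpha_*nR}\,\E\bigl[e^{-\alpha_*(S_n-nR)}\,\mathbf{1}\{S_n\leq nR\}\bigr],
$$
with $S_n:=\sum_{i=1}^n M(Y_i)$. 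The defining relation $\varphi_M'(\alpha_*)=R$ makes $S_n-nR$ mean-zero with variance $n\varphi_M''(\alpha_*)\in(0,\infty)$, and since $-\alpha_*>0$ the integrand is concentrated near $S_n=nR$; a one-sided local-limit/Laplace argument then shows the remaining expectation is of order $n^{-1/2}$, so its $n$-th root tends to $1$ and (a) follows.

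For (b) the Gibbs correspondence from \cite{KP2020} provides the weak convergence $X_n^{(1)}\Rightarrow p_M$. To upgrade this to convergence of second moments, I would establish uniform integrability of $(X_n^{(1)})^2$. The marginal density of $X_n^{(1)}$ is log-concave, by Pr\'ekopa–Leindler applied to $t\mapsto\vol_{n-1}(B_M^{n-1}(nR-M(t)))$, and the same tilt as in (a) bounds it uniformly in $L^\infty(\R)$; standard facts for log-concave probability densities then promote weak convergence to convergence of all polynomial moments, yielding (b).

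The main technical obstacle lies in (a): extracting the precise $n^{-1/2}$ behaviour of the expectation requires a Stone-type one-sided local central limit theorem for the distribution of $M(Y_1)$, valid under only the generic Orlicz-function hypotheses on $M$. Fortunately $\alpha_*<0$ forces $p_M$ to be a bounded log-concave density on $\R$ with all moments finite, and the continuity and evenness of $M$ make the distribution of $M(Y_1)$ absolutely continuous on $[0,\infty)$ and hence non-lattice; the classical local CLT, combined with the moderate-deviation machinery of Petrov \cite{P1965,P1975} and Eichelsbacher–L\"owe \cite{EL2003} referenced in the introduction, supplies exactly the quantitative input needed.
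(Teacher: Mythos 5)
Your proposal is correct and rests on the same basic identity as the paper: by $1$-symmetry, $L_{B_M^n(nR)}^2=\frac{1}{n}\E\big[\Vert X_n\Vert_2^2\big]\,\vol_n(B_M^n(nR))^{-2/n}=\E\big[(X_n^{(1)})^2\big]\,\vol_n(B_M^n(nR))^{-2/n}$, and both arguments dispose of the volume factor via $\vol_n(B_M^n(nR))^{1/n}\to e^{-\alpha_*R}\int_\R e^{\alpha_*M(x)}dx$ from \cite{KP2020} (which the paper simply cites; your re-derivation through the exponential tilt and a one-sided local CLT is exactly the proof of that cited result, so it is redundant but sound). Where you genuinely diverge is in the moment lemma. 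The paper proves $\frac{1}{n}\E[\Vert X_n\Vert_2^2]\to L_Z^2$ by the layer-cake formula, using its Chebyshev-based thin-shell estimate (Lemma \ref{lem: thin-shell concetration via chebyshev}) for the bulk $t<L_Z$ and, for the tail, Borell's inequality combined with the a priori bound $L_{B_M^n(nR)}\leq C$ valid for $1$-symmetric bodies. You instead work with the one-dimensional marginal: weak convergence $X_n^{(1)}\Rightarrow p_M$ plus uniform integrability of $(X_n^{(1)})^2$ from log-concavity. This is legitimate and arguably more direct, but three points deserve care: (i) the weak convergence is not the volume theorem of \cite{KP2020} but its maximum entropy principle, and you need exchangeability to pass from the empirical coordinate distribution to the single marginal -- make that reduction explicit; (ii) the uniform $L^\infty$ bound on the marginal density is neither immediate (it needs the same $n^{-1/2}$ local-CLT asymptotics for $\vol_{n-1}(B_M^{n-1}(nR))$ as for $\vol_n$) nor what you actually use -- the standard route to uniform integrability is Borell's lemma, which only requires a fixed compact set of mass at least $2/3$ under every marginal, and that follows from the weak convergence alone; (iii) your non-lattice claim for $M(Y_1)$ does hold (an Orlicz function is strictly increasing on $(0,\infty)$), but the paper sidesteps this entirely by citing the asymptotic expectation formula \eqref{eq:asymptotic expectation} from \cite[Prop.\ 3.2]{KP2020}. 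In short, the paper recycles the thin-shell and Gibbs machinery it has already built, while your route trades that for the maximum entropy principle and Borell's lemma; both are complete modulo the cited inputs.
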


This result will be obtained as a consequence of a concentration result for a random vector uniformly distributed on $B_M^n(nR)$ on a thin-shell of radius $\sqrt{n}L_Z$. The following result provides a much sharper concentration estimate than the general situation under an extra assumption on the growth of the function $M$.

\begin{thmalpha}\label{thm:concentration thin-shell orlicz}
Let $M:\R\to[0,\infty)$ be an Orlicz function such that $M\in\Omega(x^2)$ as $x\to\infty$, $n\in\N$, $R\in(0,\infty)$. Assume that $Z$ is a random variable with Gibbs density $p_M$ and $X_n$ a random vector uniformly distributed on $B_M^n(nR)$. Then, for every sequence $(t_n)_{n\in\N}\in(0,\infty)^\N$ such that $\frac{1}{\sqrt{n}}\ll t_n\ll1$, we have that, as $n\to\infty$,
$$
\Pro\left[\Bigg|\frac{\Vert X_n\Vert_2^2}{nL_Z^2}-1\Bigg|\geq t_n\right]\leq|\alpha_*|\sqrt{2\pi n\,\varphi_M^{\prime\prime}(\alpha_*)}e^{-\frac{t_n^2nL_Z^4(1+o(1))}{2\Var[Z^2]}}\big(1+o(1)\big),
$$

where
$$
L_Z^2:=\frac{\int_\R x^2e^{\alpha_* M(x)}dx}{\int_\R e^{\alpha_* M(x)}dx}.
$$
\end{thmalpha}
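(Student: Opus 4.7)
The strategy is to exploit the Kabluchko-Prochno identification of the uniform measure on $B_M^n(nR)$ with a Gibbs-tilted conditional distribution. Let $Y_1,\dots,Y_n$ be i.i.d.\ copies of $Z$, with joint density $\exp(\alpha_* \sum_i M(y_i) - n\varphi_M(\alpha_*))$, and set $S_n := \sum_{i=1}^n M(Y_i)$ and $T_n := \sum_{i=1}^n Y_i^2$, so that $\E[M(Y_i)] = R$ and $\E[Y_i^2] = L_Z^2$. A straightforward change of measure yields
$$
\Pro\left[\Bigg|\frac{\Vert X_n\Vert_2^2}{nL_Z^2}-1\Bigg|\geq t_n\right] = \frac{\E\bigl[\mathbf{1}_{\{|T_n - nL_Z^2|\ge t_n n L_Z^2\}}\,e^{-\alpha_*(S_n-nR)}\,\mathbf{1}_{\{S_n\le nR\}}\bigr]}{\E\bigl[e^{-\alpha_*(S_n-nR)}\,\mathbf{1}_{\{S_n\le nR\}}\bigr]},
$$
and the task reduces to estimating numerator and denominator separately.

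For the denominator, I would use that $S_n - nR$ has mean $0$ and variance $n\varphi_M^{\prime\prime}(\alpha_*)$. Since $-\alpha_*>0$, the integrand $e^{-\alpha_*(S_n-nR)}\mathbf{1}_{\{S_n-nR\le 0\}}$ is a bounded, monotone function of the centered sum, and a Laplace-type computation on the Gaussian approximation (completing the square and applying Mill's ratio to the resulting one-sided Gaussian integral in the regime where the tilt pushes the mode outside the integration domain) gives
$$
\E\bigl[e^{-\alpha_*(S_n-nR)}\mathbf{1}_{\{S_n\le nR\}}\bigr] \sim \frac{1}{|\alpha_*|\sqrt{2\pi n\,\varphi_M^{\prime\prime}(\alpha_*)}},
$$
which matches the Kabluchko-Prochno volume asymptotics for $B_M^n(nR)$; a fully rigorous justification proceeds via a local CLT or Petrov's sharp Gaussian approximation, both available under our hypotheses.

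For the numerator, the same sign argument shows $e^{-\alpha_*(S_n-nR)}\mathbf{1}_{\{S_n\le nR\}}\le 1$, hence the numerator is bounded above by $\Pro[|T_n - nL_Z^2|\ge t_n n L_Z^2]$. This reduces the problem to a deviation inequality for the i.i.d.\ sum $T_n = \sum Y_i^2$, whose summands have mean $L_Z^2$ and variance $\Var[Z^2]$. The growth assumption $M\in\Omega(x^2)$ is essential here: it ensures that $p_M(x)$ has at least Gaussian decay, so that $\E[e^{\lambda Z^2}]<\infty$ for some $\lambda>0$, which is precisely the Cramér-type condition required to apply Petrov's moderate deviation theorem (or the Eichelsbacher-L\"owe version cited in the introduction). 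In the regime $\frac{1}{\sqrt{n}}\ll t_n \ll 1$ this yields the sharp Gaussian asymptotic
$$
\Pro\bigl[|T_n - nL_Z^2|\ge t_n n L_Z^2\bigr] = \exp\!\left(-\frac{t_n^2 n L_Z^4}{2\Var[Z^2]}\,(1+o(1))\right).
$$

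Combining the three estimates yields exactly the claimed inequality: the denominator asymptotic contributes the prefactor $|\alpha_*|\sqrt{2\pi n\,\varphi_M^{\prime\prime}(\alpha_*)}$, while the factor $2$ from the two-sided probability and any bounded multiplicative corrections are absorbed into $(1+o(1))$ in the exponent, which is legitimate because $t_n^2 n\to\infty$ and the exponential dominates any polynomial. The main obstacle I anticipate is securing the \emph{sharp} Gaussian constant in the moderate deviation estimate for $T_n$, rather than merely the correct logarithmic rate; this is exactly where the $\Omega(x^2)$ hypothesis enters and why it is essentially the natural threshold for this theorem.
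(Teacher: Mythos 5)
Your proposal is correct and follows essentially the same route as the paper: the same Gibbs change of measure reducing the probability to a ratio of expectations, the same upper bound on the numerator by discarding the factor $e^{-\alpha_*(S_n-nR)}\mathbf{1}_{\{S_n\le nR\}}\le 1$, the same sharp asymptotic $\frac{1+o(1)}{|\alpha_*|\sqrt{2\pi n\varphi_M''(\alpha_*)}}$ for the denominator (which the paper simply cites from Kabluchko--Prochno rather than re-deriving via a local CLT), and the same application of Petrov's moderate deviation principle to $\sum_i(Z_i^2-L_Z^2)$ at speed $s_n^2$ with $s_n=t_n\sqrt{n}$. No gaps.
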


In the proof of these results, we use ideas recently put forward by Kabluchko and Prochno in \cite{KP2020}. As explained in Section 1.2 of their paper, a probabilistic approach to the geometry of Orlicz balls (and in particular to their asymptotic volumetric properties) is naturally associated to Gibbs distributions at certain critical inverse temperatures with potentials given by Orlicz functions. Underlying this connection is the maximum entropy principle from statistical mechanics (see \cite{RAS2015}). In the very same spirit, we use those Gibbs distributions to determine the asymptotic value of the isotropic constant for Orlicz balls, which, to the reader familiar with those distributions, is apparent by merely looking at the quantities appearing in Theorem \ref{thm:isotropic constant orlicz}. Another ingredient in the proofs is the use of moderate deviation principles, with which we can get the optimal constants in some inequalities. An alternative use of Bernstein's inequality would provide a similar estimate with slightly worse constants.


Having presented Theorem \ref{thm:concentration thin-shell orlicz}, a natural question that arises now is how good the bound presented there really is. As it turns out, whenever the sequence $t_n$ is not getting too small, i.e., $\frac{1}{n^{1/4}}\ll t_n \ll 1$, on the exponential scale we can prove a matching lower bound on the upper tail concentration probability. This is part of the following result, which establishes something slightly more general. We shall briefly discuss the conditions relating the sequences $(r_n)_{n\in\N}$ and $(t_n)_{n\in\N}$ appearing in the statement in Remark \ref{rem:discussion of conditions} below.

\begin{thmalpha}\label{thm:lower bound}
Let $M:\R\to[0,\infty)$ be an Orlicz function such that $M\in\Omega(x^2)$ as $x\to\infty$, $n\in\N$, $R\in(0,\infty)$. Assume that $Z$ is a random variable with Gibbs density $p_M$ and $X_n$ a random vector uniformly distributed on $B_M^n(nR)$. Then, for every two sequences $(r_n)_{n\in\N},(t_n)_{n\in\N}\in(0,\infty)^\N$ such that $\frac{1}{\sqrt{n}}\ll\frac{r_n}{n}\ll t_n\ll1$, we obtain, as $n\to\infty$,
$$
\Pro\left[\Bigg|\frac{\Vert X_n\Vert_2^2}{nL_Z^2}-1\Bigg|\geq t_n\right]\geq |\alpha_*|\sqrt{2\pi n\varphi^{\prime\prime}(\alpha_*)}e^{-r_n\big(-\alpha_*+o(1)\big)}\big(1+o(1)\big),
$$
where
$$
L_Z^2:=\frac{\int_\R x^2e^{\alpha_* M(x)}dx}{\int_\R e^{\alpha_* M(x)}dx}.
$$
In particular, whenever $\frac{1}{n^{1/4}}\ll t_n\ll1$ (and choosing $r_n=t_n^2n$), then we obtain
$$
\Pro\left[\Bigg|\frac{\Vert X_n\Vert_2^2}{nL_Z^2}-1\Bigg|\geq t_n\right]\geq |\alpha_*|\sqrt{2\pi n\varphi^{\prime\prime}(\alpha_*)}e^{-t_n^2nL_Z^4\big(-\alpha_*+o(1)\big)}\big(1+o(1)\big).
$$
\end{thmalpha}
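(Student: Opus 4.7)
Following the strategy underlying Theorem B, the plan is to exploit the Kabluchko--Prochno connection between the uniform distribution on $B_M^n(nR)$ and the product Gibbs measure $\mu:=p_M^{\otimes n}$. Setting $S_n(y):=\sum_{i=1}^n M(y_i)$ and $T_n(y):=\sum_{i=1}^n y_i^2$ and rewriting Lebesgue measure via $dy = e^{n\varphi_M(\alpha_*)-\alpha_* S_n(y)}\,d\mu(y)$, one obtains
\begin{equation*}
\Pro\!\left[\left|\frac{\Vert X_n\Vert_2^2}{nL_Z^2}-1\right|\geq t_n\right]
= \frac{\E_\mu\bigl[e^{-\alpha_* S_n}\mathbbm{1}\{S_n\leq nR\}\mathbbm{1}_{A_n}\bigr]}{\E_\mu\bigl[e^{-\alpha_* S_n}\mathbbm{1}\{S_n\leq nR\}\bigr]},
\end{equation*}
where $A_n:=\{\,|T_n-nL_Z^2|\geq nL_Z^2 t_n\,\}$. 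By symmetry it suffices to lower bound the upper-tail contribution $A_n^+:=\{T_n\geq nL_Z^2(1+t_n)\}$, the lower tail being handled identically.

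For the denominator I would carry out a sharp one-sided Laplace estimate, using that under $\mu$ the sum $S_n$ has mean $nR$ and variance $n\varphi_M''(\alpha_*)$. Since $s\mapsto e^{-\alpha_* s}$ is increasing (recall $\alpha_*<0$) and the indicator truncates at $nR$, a local CLT for the density of $S_n$ at $nR$ combined with the one-sided integral $\int_{-\infty}^{0}e^{-\alpha_* u}\,du = 1/|\alpha_*|$ yields
\begin{equation*}
\E_\mu\bigl[e^{-\alpha_* S_n}\mathbbm{1}\{S_n\leq nR\}\bigr] = \frac{e^{-\alpha_* nR}}{|\alpha_*|\sqrt{2\pi n\varphi_M''(\alpha_*)}}\bigl(1+o(1)\bigr),
\end{equation*}
which produces precisely the prefactor $|\alpha_*|\sqrt{2\pi n\varphi_M''(\alpha_*)}$ appearing in the stated bound. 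This is the same estimate that underlies the computation of the denominator in Theorem B and the asymptotic of the isotropic constant in Theorem A.

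For the numerator I would restrict to the event $E_n:=\{S_n\in[nR-r_n,nR]\}\cap A_n^+$. On $E_n$ the monotonicity of $s\mapsto e^{-\alpha_* s}$ gives the pointwise estimate $e^{-\alpha_* S_n}\geq e^{-\alpha_*(nR-r_n)}=e^{-\alpha_* nR}\,e^{-r_n(-\alpha_*)}$, whence
\begin{equation*}
\E_\mu\bigl[e^{-\alpha_* S_n}\mathbbm{1}_{E_n}\bigr]\;\geq\; e^{-\alpha_* nR}\,e^{-r_n(-\alpha_*)}\,\Pro_\mu[E_n].
\end{equation*}
Dividing by the denominator asymptotic produces the factor $|\alpha_*|\sqrt{2\pi n\varphi_M''(\alpha_*)}\,e^{-r_n(-\alpha_*)}$, leaving only $\Pro_\mu[E_n]$ to be absorbed into the $(1+o(1))$ and the $o(1)$ in the exponent.

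The main technical obstacle is thus to show $\log\Pro_\mu[E_n]=o(r_n)$. Here the growth assumption $M\in\Omega(x^2)$ guarantees that $Y_i^2$ has sufficiently many exponential moments under $\mu$ for moderate deviation principles of Petrov and Eichelsbacher--L\"owe type to apply jointly to the pair $(S_n,T_n)$. The condition $r_n\gg\sqrt{n}$ renders the first marginal $\Pro_\mu[\{S_n\in[nR-r_n,nR]\}]$ asymptotically equal to $1/2$, while the condition $r_n/n\ll t_n$ is precisely calibrated so that the moderate-deviation cost of forcing $T_n\geq nL_Z^2(1+t_n)$ enters only as a $o(1)$ correction to $-\alpha_*$ in the exponent. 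Combining these pieces yields the general bound; the ``in particular'' statement then follows by taking $r_n:=t_n^2 n$, which is admissible whenever $n^{-1/4}\ll t_n\ll 1$.
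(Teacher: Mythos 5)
Your overall architecture is the same as the paper's: the change of measure to the product Gibbs law, the sharp asymptotic $\E_\mu\bigl[e^{-\alpha_* S_n}\mathbbm{1}\{S_n\leq nR\}\bigr]=e^{-\alpha_* nR}\bigl(1+o(1)\bigr)/\bigl(|\alpha_*|\sqrt{2\pi n\varphi_M''(\alpha_*)}\bigr)$ for the denominator (this is exactly \eqref{eq:asymptotic expectation}, taken from the proof of Proposition 3.2 in Kabluchko--Prochno), and the restriction of the numerator to $S_n\in[nR-r_n,nR]$ with the pointwise bound $e^{-\alpha_* S_n}\geq e^{-\alpha_* nR}e^{\alpha_* r_n}$ is precisely Lemma \ref{lem:LowerBoundAnyt}. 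Up to that point your proposal is correct and identical in substance to the paper's.

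The gap is the step you yourself flag as the main technical obstacle: showing $\log\Pro_\mu[E_n]=o(r_n)$. You assert that the hypotheses are ``precisely calibrated so that the moderate-deviation cost of forcing $T_n\geq nL_Z^2(1+t_n)$ enters only as an $o(1)$ correction,'' but that cost is of order $t_n^2n$, and $t_n^2n=o(r_n)$ does \emph{not} follow from $\sqrt{n}\ll r_n\ll t_n n$: the ratio $t_n^2n/r_n=t_n\cdot(t_n n/r_n)$ is a product of a null sequence with one tending to infinity and can go either way (e.g.\ $t_n=n^{-1/8}$, $r_n=n^{3/5}$ gives $t_n^2n=n^{3/4}\gg r_n$). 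So the route ``estimate the two marginal costs and check both are $e^{-o(r_n)}$'' does not close as stated, and the asymptotic independence of the two constraints is also left unaddressed. What the paper actually does is work at the speed $v_n^2=r_n^2/n$ (which is $o(r_n)$ because $r_n\ll n$): it shows that $\frac{1}{s_n\sqrt n}\sum_iY_i^{(2)}$ with $s_n=t_n\sqrt n$ is superexponentially negligible at speed $v_n^2$ --- this is where $r_n/n\ll t_n$ enters, via $t_n^2n/v_n^2=(t_n n/r_n)^2\to\infty$ --- then invokes exponential equivalence (Lemma \ref{lem:exponentially equivalent}) to transfer the one-dimensional MDP of $\frac{1}{v_n\sqrt n}\sum_iY_i^{(1)}$ to the pair, and finally applies the LDP lower bound on the set $A_\varepsilon=[-1,-\varepsilon]\times\{|y|\geq1\}$ to conclude $\Pro_\mu[E_n]\geq e^{-O(v_n^2)}=e^{-o(r_n)}$. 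You would need to reproduce (or replace) this two-scale argument; be aware that even that step is delicate, since the limiting rate function of the pair is $+\infty$ off the axis $\{y=0\}$, so the naive LDP lower bound over $A_\varepsilon$ degenerates and additional justification is required. Your ``in particular'' deduction with $r_n=t_n^2n$ is fine once the general bound is in place.
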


Last but not least, we study the thin-shell concentration in the case of $\ell_p^n$ balls. Obviously, $\ell_p^n$ balls are Orlicz balls for the function $M(t)=|t|^p$, $1\leq p< \infty$ and this means that Theorem \ref{thm:concentration thin-shell orlicz} carries over to the case $p\geq2$, providing in our setting concentration results similar to \eqref{eq:Concentration Schechtamnn-Zinn-Naor-pgeq2} on a thin shell of a slightly different radius. However, they do not give anything when $p<2$. Still, using a result on moderate deviations for independent and identically distributed random variables due to Eichelsbacher and L\"owe \cite{EL2003}, we can get an asymptotic upper bound on the thin-shell width concentration for a restricted $t_n$-range, which still allows us to get as close to $\frac{1}{\sqrt{n}}$ as we wish. In fact, we provide an asymptotically matching lower bound for $p$ and $t_n$ in a certain range. Our result reads as follows.

\begin{thmalpha}\label{thm:thin-shell concetration lp balls 1<p<2}
Let $1\leq p <2$, $n\in\N$, and $X_n$ be a random vector uniformly distributed on $B_p^n(n):=B_{|\cdot|^p}^n(n)$. Assume that $(t_n)_{n\in\N}\in(0,\infty)^\N$ is a sequence such that $\frac{1}{\sqrt{n}}\ll t_n\ll\frac{n^\frac{p}{2(4-p)}}{\sqrt{n}}=\frac{1}{n^{\frac{4-2p}{2(4-p)}}}$. Then, as $n\to\infty$,
$$
\Pro\left[\Bigg|\frac{\Vert X_n\Vert_2^2}{nL_Z^2}-1\Bigg|\geq t_n\right]\leq\sqrt{\frac{2\pi n}{p}}e^{-\frac{t_n^2nL_Z^4(1+o(1))}{2\Var(Z^2)}}\big(1+o(1)\big),
$$

where $Z$ is a $p$-generalized Gaussian random variable with density $p(x)=\frac{e^{-\frac{|t|^p}{p}}}{2p^{1/p}\Gamma\left(1+\frac{1}{p}\right)}dt$ and then
$$
L_Z^2:=\frac{p^{2/p}\Gamma\left(1+\frac{3}{p}\right)}{3\Gamma\left(1+\frac{1}{p}\right)}\quad\textrm{and}\quad\Var[Z^2]=\frac{p^{4/p}\left(9\Gamma\left(1+\frac{5}{p}\right)\Gamma\left(1+\frac{1}{p}\right)-5\Gamma\left(1+\frac{3}{p}\right)^2\right)}{45\Gamma\left(1+\frac{1}{p}\right)^2}.
$$
Furthermore, if $\,\frac{4}{3}<p<2$ and $\frac{1}{n^{1/4}}\ll t_n\ll\frac{n^{\frac{3p-4}{4(4-p)}}}{n^{1/4}}$ we have that
$$
\sqrt{\frac{2\pi n}{p}}e^{-\frac{t_n^2nL_Z^4}{p}\big(1+o(1)\big)}\big(1+o(1)\big)\leq\Pro\left[\Bigg|\frac{\Vert X_n\Vert_2^2}{nL_Z^2}-1\Bigg|\geq t_n\right]\leq\sqrt{\frac{2\pi n}{p}}e^{-\frac{t_n^2nL_Z^4(1+o(1))}{2\Var(Z^2)}}\big(1+o(1)\big),
$$
\end{thmalpha}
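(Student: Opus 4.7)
Specialising the setup to $M(x)=|x|^p$ yields $\alpha_*=-1/p$ and $\varphi_M^{\prime\prime}(\alpha_*)=p$, so that the associated Gibbs density $p_M$ is the $p$-generalised Gaussian. The plan is to start from the Kabluchko--Prochno probabilistic representation that underlies the proofs of Theorems~A--C: for every Borel set $A\subset\R^n$,
\[
\Pro\bigl[X_n\in A\bigr]=\frac{\ee{\mathbf{1}_{\{Z\in A\}}\,\mathbf{1}_{\{S_n\le n\}}\,e^{-\alpha_*S_n}}}{\ee{\mathbf{1}_{\{S_n\le n\}}\,e^{-\alpha_*S_n}}},
\]
where $Z=(Z_1,\dots,Z_n)$ has i.i.d.\ coordinates with density $p_M$ and $S_n=\sum_{i=1}^n|Z_i|^p$. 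Since $\E[|Z|^p]=1$ and $\Var[|Z|^p]=p$, rewriting the denominator as $e^{n/p}\int_0^\infty e^{-s/p}f_{n-S_n}(s)\,\dif s$ and invoking a local central limit theorem for $S_n$ produces the sharp asymptotic
\[
\ee{\mathbf{1}_{\{S_n\le n\}}\,e^{-\alpha_*S_n}}\;\sim\;\frac{e^{n/p}}{\sqrt{2\pi n/p}},
\]
which accounts for the prefactor $\sqrt{2\pi n/p}=|\alpha_*|\sqrt{2\pi n\,\varphi_M^{\prime\prime}(\alpha_*)}$ appearing on both sides of Theorem~D.

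\emph{Upper bound.} Replacing $e^{-\alpha_*S_n}$ by its maximum $e^{n/p}$ on $\{S_n\le n\}$ and then dropping the indicator in the numerator reduces the problem to bounding $\Pro[Z\in A]$, where $A=\{x\in\R^n:\bigl|\|x\|_2^2-nL_Z^2\bigr|\ge t_n nL_Z^2\}$. Because $Z^2$ has Weibull-type tails with shape parameter $p/2<1$, classical Cram\'er-scale large deviations are not available across the full moderate window; in their place I would invoke the moderate deviation principle of Eichelsbacher--L\"owe for i.i.d.\ sums with Weibull tails, which is tailor-made for this setting and yields the sharp rate $t_n^2 nL_Z^4/(2\Var[Z^2])$ in exactly the range $1/\sqrt n\ll t_n\ll n^{p/(2(4-p))}/\sqrt n$; the exponent $p/(2(4-p))$ is precisely the Eichelsbacher--L\"owe window for Weibull index $p/2$. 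Combining this with the denominator asymptotics above then yields the upper bound.

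\emph{Lower bound for $4/3<p<2$, and main obstacle.} For the lower bound I would restrict the numerator integral to $\{S_n\in[n-r_n,n]\}$ for an auxiliary sequence $r_n$ with $\sqrt n\ll r_n\ll nt_n$, on which $e^{-\alpha_*S_n}\ge e^{n/p}e^{-r_n/p}$; this produces
\[
\Pro\bigl[X_n\in A\bigr]\ge\sqrt{\tfrac{2\pi n}{p}}\,e^{-r_n/p}\,\Pro\bigl[Z\in A,\,S_n\in[n-r_n,n]\bigr](1+o(1)).
\]
The joint probability is then handled by combining $\Pro[S_n\in[n-r_n,n]]=\Theta(1)$ (valid for $r_n\gg\sqrt n$) with the lower-bound half of the Eichelsbacher--L\"owe MDP for $\sum_i Z_i^2$, after verifying that these two events decouple to leading order on the moderate deviation scale. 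Optimising via the choice $r_n\asymp t_n^2 nL_Z^4$ produces exactly the rate $t_n^2 nL_Z^4/p=t_n^2 nL_Z^4(-\alpha_*)$ claimed in Theorem~D. The hypothesis $p>4/3$ and the narrower window $1/n^{1/4}\ll t_n\ll n^{(3p-4)/(4(4-p))}/n^{1/4}$ arise because the Weibull shape parameter $p/2$ must exceed $2/3$ for the Eichelsbacher--L\"owe lower-bound MDP to remain in force throughout. The chief technical obstacle is ensuring that the local CLT for $S_n$, the Eichelsbacher--L\"owe MDP for $\sum_i Z_i^2$, and the approximate independence of these two correlated sums at the moderate deviation scale all hold simultaneously on the same $t_n$-window; the Weibull-tailed MDP is genuinely delicate, and its restricted range is what ultimately dictates the upper cutoff on $t_n$ in both halves of the statement.
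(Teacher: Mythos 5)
Your upper bound is correct and is exactly the paper's route: the reduction
\[
\Pro\left[\Bigg|\frac{\Vert X_n\Vert_2^2}{n}-L_Z^2\Bigg|\geq t\right]\leq\sqrt{\tfrac{2\pi n}{p}}\,\Pro\left[\Bigg|\frac{1}{n}\sum_{i=1}^n (Z_i^2-L_Z^2)\Bigg|\geq t\right]\big(1+o(1)\big)
\]
is Lemma \ref{lem:WidthAnyt} specialised to $M(x)=|x|^p$ (with the same local-CLT normalisation of the denominator via \eqref{eq:asymptotic expectation}), and the window $t_n\ll n^{p/(2(4-p))}/\sqrt n$ is obtained in the paper, just as in your sketch, by verifying the Eichelsbacher--L\"owe tail condition for the Weibull-type variable $Z^2$ (Lemma \ref{lem:MDP Z^2}).

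The lower bound is where there is a genuine gap. Your reduction to $\sqrt{2\pi n/p}\,e^{-r_n/p}\,\Pro\big[Z\in A,\,S_n\in[n-r_n,n]\big]$ is the paper's Lemma \ref{lem:LowerBoundAnyt}, but your treatment of the joint probability --- $\Pro[S_n\in[n-r_n,n]]=\Theta(1)$ times the Eichelsbacher--L\"owe lower bound for $\sum_iZ_i^2$, modulo a decoupling to be "verified" --- cannot work as stated, and the decoupling is not a deferrable technicality. Since $\Cov(|Z|^p,Z^2)>0$, conditioning on $|\frac1n\sum_i(Z_i^2-L_Z^2)|\ge t_n$ shifts $S_n$ by $\Theta(nt_n)\gg r_n\asymp t_n^2n$, so the conditional probability of $\{S_n\in[n-r_n,n]\}$ is itself exponentially small of order $e^{-\Theta(t_n^2n)}$, i.e.\ of the same order as the main term: the two constraints interact at leading exponential order. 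Moreover, even granting the decoupling, your arithmetic does not produce the claimed constant: you would get exponent $r_n/p+t_n^2nL_Z^4/(2\Var[Z^2])$, which with $r_n\asymp t_n^2nL_Z^4$ is $t_n^2nL_Z^4\big(\tfrac1p+\tfrac{1}{2\Var[Z^2]}\big)$ rather than $t_n^2nL_Z^4/p$ (and under genuine decoupling one would take $r_n$ as small as permitted, not $r_n\asymp t_n^2n$). The paper argues differently: it normalises the two sums at different speeds, $\frac{1}{r_n}\sum_iY_i^{(1)}$ and $\frac{1}{t_nn}\sum_iY_i^{(2)}$, and shows the second component is exponentially equivalent to $0$ at the slower speed $v_n^2=r_n^2/n$ (this is where $r_n/n\ll t_n$ and the Eichelsbacher--L\"owe window for $\sum_iZ_i^2$ enter), so that only the $Y^{(1)}$-constraint contributes to the rate and the cost is $e^{\alpha_*r_n}e^{-o(r_n)}$. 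Accordingly, the conditions $p>4/3$ and $n^{-1/4}\ll t_n\ll n^{(3p-4)/(4(4-p))}/n^{1/4}$ arise from intersecting $t_n\gg n^{-1/4}$ (forced by $r_n=t_n^2n\gg\sqrt n$) with the EL window $t_n\ll n^{(p-2)/(4-p)}$ --- not, as you suggest, from a separate validity range of the "lower-bound half" of the Eichelsbacher--L\"owe theorem, which holds on the same window as the upper half.
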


As already mentioned in the introduction we look at concentration around a sphere of a slightly different radius from $n^{-1/2}\E\big[\|X_n\|_2^2\big]^{1/2}$. Thus, in order to be able to compare the bounds we obtain in the case $1\leq p <2$ with \eqref{eq:Concentration Schechtamnn-Zinn-Naor-leq2}, one should understand the speed at which $n^{-1/2}\E\big[\|X_n\|_2^2\big]^{1/2}$ converges to its limit $L_Z$.

\section{Preliminaries}

We shall present now the notation and fundamental notions used in this paper. Concerning notation, for a Borel set $A\subset \R^n$, we shall denote by $\vol_n(A)$ the $n$-dimensional Lebesgue measure of $A$. We denote by $\SSS^{n-1}:= \{\theta\in\R^n\,:\, \|\theta\|_2=1\}$ the Euclidean unit sphere in $n$-dimensional space.
In this paper, for two sequences $(x_n)_{n\in\N}$ and $(y_n)_{n\in\N}$ we use the Landau notation $x_n\in o(y_n)$ or $x_n\ll y_n$ if $\lim_{n\to\infty} \frac{x_n}{y_n}=0$. We simply write $o(1)$ to refer to some sequence tending to zero as $n\to\infty$.
Moreover, for two functions $f,g:\R\to\R$, we write $f\in\Omega(g)$ whenever there exists $C\in(0,\infty)$ and $x_0\in(0,\infty)$ such that for all $x>x_0$ one has that $C|g(x)| \leq |f(x)|$.

\subsection{Isotropic convex bodies}\label{subsec:isotropic convex bodies}

Let us start with some basics concerning the isotropic position of convex bodies, i.e., of compact and convex sets with non-empty interior. We say that a convex body $K\subset \R^n$ is isotropic (or in isotropic position) whenever the following three conditions are satisfied
\begin{itemize}
\item $\vol_n(K)=1$
\item For all $\theta\in\SSS^{n-1}$, we have $\int_{K}\langle\theta,x \rangle\,dx=0$ (centroid at the origin)
\item There exists a constant $L_K\in(0,\infty)$ such that for all $\theta\in\SSS^{n-1}$, we have $\int_{K}\langle\theta,x \rangle^2\,\dint x=L_K^2$.
\end{itemize}
$L_K$ is called the isotropic constant of $K$. Every convex body can be brought, by means of an affine transformation, into isotropic position. This affine transformation is unique up to orthogonal transformations and, since if a convex body $K$ is isotropic then so is, with the same isotropic constant, each orthogonal image of $K$, we can define the isotropic constant of any convex body as the isotropic constant of its isotropic image.

The affine map that takes a convex body $K\subset\R^n$ to an isotropic image appears as the solution of a minimization problem and then the isotropic constant of any convex body can be defined as
\[
L_K^2:= \frac{1}{n} \min\Bigg\{\frac{1}{\vol_n(TK)^{1+2/n}}\int_{a+TK} \|x\|_2^2 \,dx\,:\, a\in\R^n, T\in\text{GL}(n)\Bigg\},
\]
Here $\text{GL}(n)$ denotes the general linear group on $\R^n$. Obviously, from the definition, $L_K$ is an affine invariant. Notice that if $K$ is a $1$-symmetric convex body (i.e., invariant under permutations and change of sign in the coordinates with respect to an orthonormal basis) we have that $\widetilde{K}:=\frac{K}{\vol_n(K)^{1/n}}$ is isotropic and then
$$
nL_K^2=\frac{1}{\vol_n(K)^{1+2/n}}\int_{K} \|x\|_2^2 \,dx.
$$

We refer the reader to \cite[Chapter 2]{IsotropicConvexBodies} for more information.

\subsection{Orlicz spaces and Gibbs measures}

Let $M:\R\to[0,\infty)$ be an Orlicz function, i.e., an even and convex function such that $M(0)=0$ and $M(t)>0$ for every $t>0$. For $R\in(0,\infty)$ and $n\in\N$, let us denote by $B_M^n(nR)$ the Orlicz ball
$$
B_M^n(nR):=\Big\{x\in\R^n\,:\,\sum_{i=1}^nM(x_i)\leq nR\Big\}.
$$
We refer to \cite[Lemma 2.1]{KP2020} to see that $B_M^n(1)$ coincides with the one defined as the unit ball of the Luxemburg norm $\|(x_1,\dots,x_n)\|_M = \inf\big\{\rho>0\,:\, \sum_{i=1}^n M(|x_i|/\rho) \leq 1 \big\}$ on $\R^n$. Observe that since $M$ is even, from the definition of $B_M^n(nR)$  we have that any Orlicz ball is a $1$-symmetric convex body and then $\widetilde{B}_M^n(nR)$ is an isotropic convex body.

The precise asymptotic volume of Orlicz balls has recently been obtained in \cite{KP2020}. For our purpose knowing the asymptotic logarithmic volume will be enough and as \cite[Theorem A]{KP2020} states, this is given as follows: let $n\in\N$, $R\in(0,\infty)$, and $M$ be an Orlicz function. Then, as $n\to\infty$,
\begin{align}\label{eq:log-volume orlicz}
\vol_n\big(B_M^n(nR)\big)^{1/n} \to e^{\varphi(\alpha_*)-\alpha_* R}.
\end{align}

Given an Orlicz function $M:\R\to[0,\infty)$ and $R\in(0,\infty)$, let $\varphi_M:(-\infty,0)\to\R$ be the function
$$
\varphi_M(\alpha)=\log \int_\R e^{\alpha M(x)}dx,\quad\alpha<0.
$$
In statistical mechanics and large deviation parlance, $\varphi$ is the logarithm of the partition function with potential $M$ at inverse temperature $-\alpha$, $\alpha\in(-\infty,0)$.
Notice that $\varphi$ is strictly increasing and convex (as a consequence of H\"older's inequality). Besides,
\begin{itemize}
\item $\displaystyle{\varphi_M^\prime(\alpha)}=\frac{\int_\R M(x)e^{\alpha M(x)}dx}{\int_\R e^{\alpha M(x)}dx}$,
\item $\displaystyle{\lim_{\alpha\to-\infty}\varphi_M^\prime(\alpha)=0}$,
\item $\displaystyle{\lim_{\alpha\to0^-}\varphi_M^\prime(\alpha)=\infty}$,
\item $\varphi_M^\prime$ is continuous and increasing (since $\varphi$ is convex).
\end{itemize}
For proofs of the second and third property, we refer the reader to the arXiv version of \cite{KP2020} or \cite[Theorem 6.2]{D1954}.
The previous properties imply that there exists a unique $\alpha_*\in(-\infty,0)$ such that $\varphi_M^\prime(\alpha_*)=R$. Let $p_M:\R\to[0,\infty)$ be the log-concave Gibbs density with potential $M$ given by
$$
p_M(x)=e^{\alpha_*M(x)-\varphi_M(\alpha_*)}=\frac{ e^{\alpha_* M(x)}}{\int_\R e^{\alpha_* M(x)}dx},\quad x\in\R,
$$
and let $Z$ be a random variable with density $p$ with respect to Lebesgue measure. Then we have the following
\begin{itemize}
\item $\E[M(Z)]=\varphi_M^\prime(\alpha_*)=R$
\item $\Var[M(Z)]=\varphi_M^{\prime\prime}(\alpha_*)$.
\end{itemize}
Moreover, it was shown in the proof of \cite[Proposition 3.2]{KP2020} (see Equation \eqref{eq:asymptotic expectation} below)
that for the random variable $Y:=M(Z)-R$ and a sequence $(Y_i)_{i\in\N}$ of independent copies of $Y$, one has the asymptotic formula
\begin{align}\label{eq:asymptotic expectation}
\E\left[\chi_{(-\infty,0]}\left(\sum_{i=1}^nY_i\right)e^{-\alpha_*\sum_{i=1}^nY_i}\right]=\frac{1+o(1)}{|\alpha_*|\sqrt{2\pi n\varphi^{\prime\prime}(\alpha_*)}}.
\end{align}

\subsection{Bernstein's inequality, LDPs, and MDPs}

Bernstein's inequality gives an estimate for the probability of the sum of $n$ independent copies of a centered random variable being, in absolute value, larger than $nt$. There are different versions of this inequality, depending on the assumptions on the random variable, which vary in the range of $t$ in which the estimate is valid. We state two of them in the following theorem, which we take from \cite[Proposition 1, (ii) and (iii)]{BLM1988}.
\begin{thm}[Bernstein's inequality]\label{thm:Bernstein}
Let $n\in\N$ and $(Y_i)_{i=1}^n$ be a sequence of independent copies of a centered random variable $Y$. Then
\begin{enumerate}
\item[i)] If there exist $\lambda\in(0,\infty)$ such that $\E \big[e^{|Y|/\lambda}\big]<\infty$ and $A\in(0,\infty)$ such that $\inf\big\{\lambda>0\,:\,\E \big[e^{\frac{|Y|}{\lambda}}\big]\leq 2\big\}\leq A$, then
$$
\Pro\left[\Bigg|\frac{1}{n}\sum_{i=1}^n Y_i\Bigg|> t\right]\leq 2e^{-\frac{t^2n}{16 A^2}}\quad\forall t\in(0,4A).
$$
\item[ii)] If there exist $\lambda\in(0,\infty)$ such that $\E\big[ e^{|Y|^2/\lambda^2}\big]<\infty$ and $A\in(0,\infty)$ such that $\inf\big\{\lambda>0\,:\,\E\big[ e^{\frac{|Y|^2}{\lambda^2}}\big]\leq 2\big\}\leq A$, then
$$
\Pro\left[\Bigg|\frac{1}{n}\sum_{i=1}^n Y_i\Bigg|> t\right]\leq 2e^{-\frac{t^2n}{8 A^2}}\quad\forall t>0.
$$
\end{enumerate}
\end{thm}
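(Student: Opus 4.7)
The plan is to deduce both estimates from the classical Chernoff/Cram\'er exponential Markov bound
$$
\Pro\Bigg[\sum_{i=1}^n Y_i>nt\Bigg]\leq e^{-snt}\,\E\big[e^{sY}\big]^n,\qquad s>0,
$$
applied once to $(Y_i)_{i=1}^n$ and once to $(-Y_i)_{i=1}^n$ and combined by a union bound to account for the two-sided tail, which produces the factor $2$ in the conclusion. The only genuine task is then to turn the Orlicz-type integrability assumption on $Y$ into a sub-Gaussian-type bound on its moment generating function of the form $\log\E[e^{sY}]\leq cA^2 s^2$, with an explicit constant $c$, on a suitable range of $s$, which can then be optimised in $s$.

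First I would convert the Orlicz integrability into moment bounds by expanding the Orlicz exponential as a power series. In case ii), $\E\big[e^{Y^2/A^2}\big]\leq 2$ immediately gives $\E[Y^{2k}]\leq 2A^{2k}k!$ for every integer $k\geq 0$, since $A$ realises the infimum up to an arbitrarily small perturbation. In case i), the same argument with $\E\big[e^{|Y|/A}\big]\leq 2$ gives $\E[|Y|^k]\leq 2A^k k!$ for every $k\geq 0$. These are precisely the moment bounds one wants, because the factor $k!$ cancels the $k!$ appearing in the denominator of the Taylor expansion of $e^{sY}$, leaving geometric-type series to sum.

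Next I would derive the MGF bounds. In case i), since $\E[Y]=0$ kills the linear term in the Taylor expansion of $e^{sY}$, the moment bound $\E[|Y|^k]\leq 2A^k k!$ yields
$$
\E\big[e^{sY}\big]\leq 1+\sum_{k\geq 2}2(|s|A)^k=1+\frac{2(sA)^2}{1-|s|A}\leq 1+4A^2 s^2\leq e^{4A^2 s^2}
$$
provided $|s|A\leq 1/2$. In case ii), a symmetrisation argument (introducing an independent copy $Y'$ and using Jensen's inequality together with $\E[Y']=0$ to obtain $\E[e^{sY}]\leq\E[e^{s(Y-Y')}]=\E[\cosh(s(Y-Y'))]$) reduces the problem to controlling the even moments of the symmetric random variable $Y-Y'$, which combined with the moment estimate above and a direct summation of the Taylor series of $\cosh$ gives a bound of the form $\E[e^{sY}]\leq e^{2A^2 s^2}$ valid for every $s\in\R$.

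The concluding step is optimisation in $s$. In case ii) the choice $s=t/(4A^2)$ minimises the exponent $-snt+2nA^2s^2$ and gives $-nt^2/(8A^2)$, hence the stated bound. In case i) the analogous choice $s=t/(8A^2)$ gives $-nt^2/(16A^2)$, but is only admissible as long as $|s|A\leq 1/2$, i.e.\ $t\leq 4A$, which is exactly the range of $t$ appearing in the theorem; in both cases the lower tail is handled by applying the same argument to $-Y_i$. The step I expect to require the most care is the bookkeeping of constants in the MGF estimate of case ii), where a naive term-by-term comparison of $\E[Y^{2k}]/(2k)!$ with $(A^2 s^2)^k/k!$ loses constants through the ratio $k!/(2k)!$; the symmetrisation trick is the standard device that keeps this bookkeeping clean and recovers the constant $2$ appearing in $\log\E[e^{sY}]\leq 2A^2s^2$.
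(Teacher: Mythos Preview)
The paper does not give its own proof of this statement: Theorem~\ref{thm:Bernstein} is quoted verbatim from \cite[Proposition 1, (ii) and (iii)]{BLM1988} and used as a black box, so there is no argument in the paper to compare your proposal against.

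Your outline is a correct and standard route to these inequalities. In part~i) the chain ``Orlicz bound $\Rightarrow$ moment bounds $\E[|Y|^k]\le 2A^kk!$ $\Rightarrow$ $\E[e^{sY}]\le e^{4A^2s^2}$ for $|s|A\le 1/2$ $\Rightarrow$ optimise at $s=t/(8A^2)$'' produces exactly the stated exponent and the range restriction $t<4A$; nothing is missing there. In part~ii) your plan is also right in spirit, but the specific mechanism you describe---symmetrise and then sum the Taylor series of $\cosh$ term by term against the moment bound $\E[(Y-Y')^{2k}]\le 2^{2k+1}A^{2k}k!$---does not by itself yield $\E[e^{sY}]\le e^{2A^2s^2}$: the $k=0$ term already contributes a factor $2$, and one ends up with a bound of the form $2e^{cA^2s^2}$, whose $n$-th power carries an unwanted $2^n$. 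The clean way to recover the exact constant is to replace the term-by-term summation by the pointwise inequality $\cosh x\le e^{x^2/2}$, giving $\E[e^{s(Y-Y')}]\le\E[e^{s^2(Y-Y')^2/2}]\le(\E[e^{s^2Y^2}])^2\le e^{2A^2s^2}$ for $|s|\le 1/A$ (using $e^{\theta u}\le 1-\theta+\theta e^u$ for $\theta\in[0,1]$ on $u=Y^2/A^2$), and to treat $|s|>1/A$ separately via $sY\le s^2A^2/4+Y^2/A^2$, which yields $\E[e^{sY}]\le 2e^{s^2A^2/4}\le e^{2s^2A^2}$ in that range. With this adjustment your optimisation at $s=t/(4A^2)$ gives exactly $-nt^2/(8A^2)$ as claimed.
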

\begin{rmk}
Notice that while the estimate in ii), which assumes a stronger condition on the random variables $Y_i$ than the first , is valid for every $t>0$, the estimate in i) is valid only for small values of $t$.
\end{rmk}
In the large deviations theory, Cramer's \cite{C1938} theorem gives us the asymptotic sharp value of the constant in the exponent for every fixed $t$.

\begin{df}
Let $(X_n)_{n\in\N}$ be a sequence of random vectors taking values in $\R^d$. Further, let $s:\N\to[0,\infty]$ and $I:\R^d\to[0,\infty]$ be a lower semi-continuous function with compact level sets $\{x\in\R^d\,:\, I(x) \leq \alpha \}$, $\alpha\in\R$. We say that $(X_n)_{n\in\N}$ satisfies a large deviation principle (LDP) with speed $s(n)$ and (good) rate function $I$ if
$$
-\inf_{x\in A^\circ}I(x) \leq\liminf_{n\to\infty}{1\over s(n)}\log\Pro[X_n\in A]\leq\limsup_{n\to\infty}{1\over s(n)}\log \Pro[X_n\in A]\leq-\inf_{x\in\overline{A}}I(x)
$$
for every Lebesgue measurable set $A\in\R^d$.
\end{df}

We notice that on the class of all $I$-continuity sets, that is, on the class of Lebesgue measurable sets $A$ for which $I(A^\circ)=I(\bar{A})$ with $I(A):=\inf\{I(x):x\in A\}$, one has the exact limit relation
$$
\lim_{n\to\infty}{1\over s(n)}\log \Pro[X^{(n)}\in A]=-I(A).
$$

The following version of Cram\'er's theorem was taken from \cite[Corollary 2.2.19]{DZ2010}.

\begin{thm}[Cram\'er]\label{thm:Cramer}
Let $(Y_n)_{n=1}^\infty$ be a sequence of independent copies of a centered random variable $Y$. Assume that
$$
\Lambda(u)=\log \E \big[e^{uY}\big] <\infty
$$
in a neighborhood of $0$. Then, for every $\in(0,\infty)$,
$$
\lim_{n\to\infty}\frac{\log\Pro\big[\left|\frac{1}{n}\sum_{i=1}^n Y_i\right|\geq t\big]}{n}=-\inf_{|s|\geq t}\Lambda^*(s),
$$
where $\Lambda^*$ is the Legendre transform of $\Lambda$.
\end{thm}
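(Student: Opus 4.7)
The plan is to follow the Gibbs-measure reduction used in the proofs of Theorems A--C, with Bernstein-type bounds replaced by the sub-exponential moderate deviation principle of Eichelsbacher and L\"owe \cite{EL2003}, since for $1\le p<2$ the potential $M(x)=|x|^p$ does not lie in $\Omega(x^2)$ and the variable $Z^2$ has only Weibull-type tails of shape $p/2<1$, so no global exponential moment is available. A direct computation for $M(x)=|x|^p$, $R=1$ yields $\alpha_*=-1/p$ and $\varphi_M''(\alpha_*)=\Var[|Z|^p]=p$, so the standing prefactor $|\alpha_*|\sqrt{2\pi n\varphi_M''(\alpha_*)}$ equals $\sqrt{2\pi n/p}$, matching the statement.

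Exactly as in the Orlicz proofs, I would write
$$
\Pro\Bigg[\,\Bigg|\frac{\|X_n\|_2^2}{nL_Z^2}-1\Bigg|\ge t_n\Bigg]=\frac{\E\Big[e^{-\alpha_*\sum_{i=1}^n Y_i}\,\chi_{\{\sum_i Y_i\le 0\}}\,\chi_{E_n}\Big]}{\E\Big[e^{-\alpha_*\sum_{i=1}^n Y_i}\,\chi_{\{\sum_i Y_i\le 0\}}\Big]}
$$
with $Y_i:=|Z_i|^p-1$, $(Z_i)$ i.i.d.\ $p$-generalized Gaussians and $E_n:=\{|\tfrac{1}{nL_Z^2}\sum_i Z_i^2-1|\ge t_n\}$. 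The denominator equals $\frac{1+o(1)}{|\alpha_*|\sqrt{2\pi n\varphi_M''(\alpha_*)}}$ by \eqref{eq:asymptotic expectation}. For the upper bound, since $-\alpha_*>0$ and $\sum_i Y_i\le 0$ on the indicator the integrand is at most $1$, so
$$
\Pro\bigl[\,\cdots\,\bigr]\le\sqrt{\tfrac{2\pi n}{p}}(1+o(1))\,\Pro\!\left[\Big|\sum_{i=1}^n(Z_i^2-L_Z^2)\Big|\ge nL_Z^2\, t_n\right].
$$
To the remaining i.i.d.\ tail I would apply the Eichelsbacher--L\"owe MDP to $W_i:=Z_i^2-L_Z^2$, which has mean zero, variance $\Var[Z^2]$, and two-sided Weibull tail of shape $p/2$; \cite{EL2003} then supplies an MDP at speed $a_n^2$ with Gaussian rate $x^2/2$ for $1\ll a_n\ll n^{p/(2(4-p))}$. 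Taking $a_n=t_n\sqrt{n}\,L_Z^2/\sqrt{\Var[Z^2]}$ translates the validity range into $t_n\ll n^{-(4-2p)/(2(4-p))}$ exactly as in the hypothesis, and produces the claimed upper bound $e^{-t_n^2 nL_Z^4/(2\Var[Z^2])(1+o(1))}$.

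For the matching lower bound under the stronger assumption $p>4/3$, I would mirror the strategy for the lower bound of Theorem C. Setting $r_n:=t_n^2 nL_Z^4$ and restricting the numerator to the slab $\{-r_n\le\sum_i Y_i\le 0\}\cap E_n$ gives the elementary estimate
$$
\E\Big[e^{-\alpha_*\sum_i Y_i}\chi_{\{\sum_i Y_i\le 0\}}\chi_{E_n}\Big]\ge e^{-r_n/p}\,\Pro\Big[\sum_i Y_i\in[-r_n,0],\;E_n\Big],
$$
and it remains to show the bivariate probability on the right is at least of order $n^{-1/2}$ times $e^{-o(r_n)}$. I would do this by a two-parameter exponential tilt of the joint law of $(|Z_i|^p,Z_i^2)$ that shifts $\frac{1}{n}\sum_i Z_i^2$ to the boundary $L_Z^2(1\pm t_n)$ of $E_n$ while keeping the tilted mean of $\sum_i Y_i$ inside $[-r_n,0]$, followed by a local central limit theorem for the resulting two-dimensional random walk at the moderate deviation scale provided by \cite{EL2003}. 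The more restrictive window $t_n\ll n^{(3p-4)/(4(4-p))}/n^{1/4}$ and the constraint $p>4/3$ arise precisely as the region in which this two-dimensional local CLT remains inside the Gaussian MDP windows for both $W_i$ and $Y_i$ simultaneously.

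The main obstacles I anticipate are, first, verifying the hypotheses of the Eichelsbacher--L\"owe MDP for $W_i=Z_i^2-L_Z^2$ and pinning down that the sharp Gaussian MDP window is $a_n\ll n^{p/(2(4-p))}$, as dictated by the Weibull exponent $p/2$ of the tail of $Z^2$; and second, for the lower bound, executing the joint two-dimensional tilt of $(W_i,Y_i)$ and justifying a local CLT at moderate deviation scale without access to a global exponential tilt, since the moment generating function of $Z^2$ is infinite everywhere for $p<2$.
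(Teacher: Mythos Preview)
Your proposal does not address the stated theorem at all. The statement you were asked to prove is Cram\'er's classical large deviation theorem for i.i.d.\ sums; the paper does not prove this but simply quotes it from \cite[Corollary~2.2.19]{DZ2010}. What you have written is instead a sketch of a proof of Theorem~\ref{thm:thin-shell concetration lp balls 1<p<2} (thin-shell concentration for $\ell_p^n$ balls, $1\le p<2$), which is an entirely different result.

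If your intent was Theorem~\ref{thm:thin-shell concetration lp balls 1<p<2}, then your upper-bound argument matches the paper's: the Gibbs-measure reduction of Lemma~\ref{lem:WidthAnyt} combined with the Eichelsbacher--L\"owe MDP (verified in Lemma~\ref{lem:MDP Z^2}) gives exactly the stated upper bound, and your identification of the $t_n$-window via the Weibull tail exponent $p/2$ is correct. However, your lower-bound strategy diverges from the paper and runs into the very obstacle you flag at the end. You propose a two-dimensional exponential tilt of $(|Z|^p,Z^2)$ followed by a local CLT, but since $\E[e^{uZ^2}]=\infty$ for all $u>0$ when $p<2$, no such tilt in the second coordinate is available, and a moderate-deviation local CLT of the type you invoke is not justified by \cite{EL2003}. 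The paper avoids this entirely: it reuses the argument of Theorem~\ref{thm:lower bound}, which never tilts the $Y_i^{(2)}$ component. Instead it shows that at speed $v_n^2=r_n^2/n$ the pair $\bigl(\tfrac{1}{v_n\sqrt n}\sum Y_i^{(1)},\tfrac{1}{s_n\sqrt n}\sum Y_i^{(2)}\bigr)$ is exponentially equivalent to $\bigl(\tfrac{1}{v_n\sqrt n}\sum Y_i^{(1)},0\bigr)$ (using only the one-dimensional MDP for $Y^{(2)}$ at scale $s_n$, which Eichelsbacher--L\"owe provides), and then applies Petrov's MDP to the first coordinate alone, where the exponential moment condition does hold. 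This is what produces the restriction $p>4/3$ and the narrower $t_n$-window, not any two-dimensional local CLT.
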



If instead of considering a fixed value of $t$ we consider a sequence $(t_n)_{n=1}^\infty$ converging to $0$ we turn our look to moderate deviation principles, which are nothing but large deviation principles under a different normalization. In this case, the rate function, which gives the asymptotic value of the constant in the exponentially decreasing probability is Gaussian under some assumptions on the random variable. The following theorem can be found in \cite[Theorem 2.2]{EL2003}.
\begin{thm}[Eichelsbacher-L\"owe]\label{lem:eicheslbacher-loewe}
Let $(Y_n)_{n\in\N}$ be a sequence of  independent copies of a centered random variable $Y$ with positive variance and let $(s_n)_{n\in\N}$ be a sequence of positive real numbers such that $1\ll s_n\ll\sqrt{n}$. Assume that
\begin{align}\label{eq:limit condition eichelsbacher-loewe}
\lim_{n\to\infty} \frac{1}{s_n^2} \log\Big(n\,\Pro\big[|Y| > \sqrt{n}s_n\big]\Big) & = -\infty.
\end{align}
Then $(\frac{1}{s_n\sqrt{n}}\sum_{i=1}^n Y_i)_{n\in\N}$ satisfies an MDP on $\R$ with speed $s_n^2$ and good rate function $I:\R\to[0,\infty)$ given by $I(x)={x^2\over 2\Var[Y]}$. In particular, for every $t\in(0,\infty)$,
$$
\lim_{n\to\infty}\frac{\log\Pro\left[\left|\frac{1}{s_n\sqrt{n}}\sum_{i=1}^n Y_i\right|\geq t\right]}{s_n^2}=-\frac{t^2}{2\Var[Y]}.
$$
\end{thm}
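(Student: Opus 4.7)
The plan is to leverage the Gibbs representation of the uniform measure on $B_p^n(n)=B_M^n(n)$ with $M(t)=|t|^p$ (here $R=1$, so $\alpha_*=-1/p$ and $\varphi_M''(\alpha_*)=p$) implicit in Section~2. Using $p_M(x)=e^{\alpha_* M(x)-\varphi_M(\alpha_*)}$ as a change of measure, for any measurable $A\subset\R^n$ one obtains
\[
\Pro[X_n\in A]=\frac{\E\bigl[\mathbbm{1}_A(Z)\,e^{-\alpha_*\sum_{i=1}^n Y_i}\mathbbm{1}_{\{\sum_i Y_i\le 0\}}\bigr]}{\E\bigl[e^{-\alpha_*\sum_i Y_i}\mathbbm{1}_{\{\sum_i Y_i\le 0\}}\bigr]},
\]
where $Z=(Z_1,\ldots,Z_n)$ are i.i.d.\ $p$-generalized Gaussians and $Y_i:=|Z_i|^p-1$. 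By \eqref{eq:asymptotic expectation}, the denominator equals $(1+o(1))/\bigl(|\alpha_*|\sqrt{2\pi n\,\varphi_M''(\alpha_*)}\bigr)=(1+o(1))\sqrt{p/(2\pi n)}$, producing the $\sqrt{2\pi n/p}$ prefactor in the statement.

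For the upper bound, I would use $e^{-\alpha_*\sum_i Y_i}\le 1$ on $\{\sum_i Y_i\le 0\}$ (since $-\alpha_*>0$) to obtain
\[
\Pro[X_n\in A]\le |\alpha_*|\sqrt{2\pi n\,\varphi_M''(\alpha_*)}\,(1+o(1))\,\Pro\bigl[(Z_1,\ldots,Z_n)\in A\bigr].
\]
Specialising to $A=\{x:|\|x\|_2^2/(nL_Z^2)-1|\ge t_n\}$ and writing $W_i:=Z_i^2-L_Z^2$, $s_n:=t_n\sqrt{n}$, the event becomes $|\sum_i W_i|\ge s_n\sqrt{n}\,L_Z^2$ with $1\ll s_n\ll\sqrt{n}$. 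Because $Z$ has $p$-generalized Gaussian tails, $\Pro[|W|>\sqrt{n}s_n]$ decays like $e^{-c(n^{1/4}s_n^{1/2})^p}=e^{-c\,n^{p/4}s_n^{p/2}}$, so the Eichelsbacher--L\"owe tail hypothesis \eqref{eq:limit condition eichelsbacher-loewe} reduces to $n^{p/4}s_n^{p/2-2}\to\infty$, i.e.\ $s_n\ll n^{p/(2(4-p))}$, which is exactly the stated constraint $t_n\ll n^{-(4-2p)/(2(4-p))}$. Theorem~\ref{lem:eicheslbacher-loewe} applied to $(W_i)$ then yields $\Pro[|\sum_i W_i|\ge t_n nL_Z^2]\le e^{-t_n^2 nL_Z^4(1+o(1))/(2\Var[Z^2])}$, and combining with the prefactor gives the claimed upper bound.

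For the lower bound (when $4/3<p<2$ and $n^{-1/4}\ll t_n\ll n^{(p-2)/(4-p)}$), I retain rather than discard the Gibbs factor, mirroring the strategy behind Theorem~\ref{thm:lower bound}. Set $r_n:=t_n^2 nL_Z^4$; the admissibility conditions $r_n/n\gg n^{-1/2}$ and $r_n/n\ll t_n$ translate into $t_n\gg n^{-1/4}$ and $t_n\ll 1/L_Z^4$, both built into the hypotheses. Restricting the numerator of the Gibbs ratio to $\{\sum_i Y_i\in[-r_n,0]\}$ exploits $e^{-\alpha_*\sum_i Y_i}\ge e^{-(-\alpha_*)r_n}=e^{-t_n^2 nL_Z^4/p}$ on that set, reducing the task to a sub-exponential joint estimate of the form
\[
\Pro\Bigl[\Bigl|\sum_i W_i\Bigr|\ge t_n nL_Z^2,\ \sum_i Y_i\in[-r_n,0]\Bigr]\ge e^{-o(r_n)}.
\]
Combined with the denominator asymptotic, this delivers the stated $\sqrt{2\pi n/p}\,e^{-t_n^2 nL_Z^4(1+o(1))/p}(1+o(1))$.

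The main technical obstacle is the joint lower bound just displayed, since $Y_i$ and $W_i$ are dependent. The key observation is that $\sum_i Y_i$ fluctuates on scale $\sqrt{n}$, which is negligible compared to $r_n\gg\sqrt{n}$, while $\sum_i W_i$ is being forced to a moderate deviation on the larger scale $t_n n$. A bivariate MDP for $\bigl(n^{-1/2}\sum_i Y_i,(t_n n)^{-1}\sum_i W_i\bigr)$, or equivalently an exponential tilt of the law of $Z$ designed to push $\E[Z^2]$ up to $L_Z^2(1+t_n)$ while holding $\E[|Z|^p]$ near $1$, should imply that the joint event has probability decaying at most sub-exponentially in $r_n$. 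The threshold $p>4/3$ enters precisely where the range $n^{-1/4}\ll t_n\ll n^{(p-2)/(4-p)}$ becomes non-empty, since $(p-2)/(4-p)>-1/4$ is equivalent to $p>4/3$.
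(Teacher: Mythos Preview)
Your proposal does not address the stated theorem at all. The statement you were asked to prove is the Eichelsbacher--L\"owe moderate deviation principle (Theorem~\ref{lem:eicheslbacher-loewe}), a background result that the paper \emph{cites} from \cite{EL2003} rather than proves. Instead, you have written a proof sketch of Theorem~\ref{thm:thin-shell concetration lp balls 1<p<2} (thin-shell concentration for $\ell_p^n$ balls with $1\le p<2$). Worse, your argument explicitly \emph{invokes} Theorem~\ref{lem:eicheslbacher-loewe} as a tool (``Theorem~\ref{lem:eicheslbacher-loewe} applied to $(W_i)$ then yields\ldots''), so even as a self-contained attempt it would be circular.

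If your intended target was Theorem~\ref{thm:thin-shell concetration lp balls 1<p<2}, then your upper-bound argument is essentially the paper's own: the Gibbs change of measure together with the asymptotic \eqref{eq:asymptotic expectation} is precisely Lemma~\ref{lem:WidthAnyt}, and your verification of the tail condition \eqref{eq:limit condition eichelsbacher-loewe} for $W_i=Z_i^2-L_Z^2$ is exactly Lemma~\ref{lem:MDP Z^2}. For the lower bound, however, there is a genuine gap. You correctly identify the joint estimate
\[
\Pro\Bigl[\Bigl|\sum_i W_i\Bigr|\ge t_n nL_Z^2,\ \sum_i Y_i\in[-r_n,0]\Bigr]\ge e^{-o(r_n)}
\]
as the crux, but your proposed justification (``a bivariate MDP\ldots or equivalently an exponential tilt\ldots should imply'') is not an argument. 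The paper handles this (see the proof of Theorem~\ref{thm:lower bound}) by showing that at speed $v_n^2=r_n^2/n$ the second coordinate $\frac{1}{s_n\sqrt{n}}\sum_i W_i$ is exponentially equivalent to $0$, so that the pair $(\frac{1}{v_n\sqrt{n}}\sum_i Y_i,\frac{1}{s_n\sqrt{n}}\sum_i W_i)$ inherits the one-dimensional MDP of the first coordinate with a degenerate rate function; the lower bound then follows from the open-set bound in that LDP. This exponential-equivalence step uses the MDP for $\sum_i W_i$ at speed $s_n^2$ together with $r_n/n\ll t_n$, and it is precisely the missing idea in your sketch.
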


The following theorem gives an MDP for sums of independent and identically distributed random vectors under similar conditions to the ones in Cram\'er's theorem and is due to Petrov \cite{P1975} (see also \cite[Theorem 3.7.1]{DZ2010}).

\begin{thm}[MDP for sums of i.i.d. random vectors]\label{lem:MDPVector}
Let $(Y_n)_{n\in\N}$ be a sequence of independent copies of a centered random vector $Y$ in $\R^d$ and let $(s_n)_{n\in\N}$ be sequence of positive real numbers such that $1\ll s_n\ll\sqrt{n}$. We assume that $Y$ is centered, its covariance matrix $\bC=\Cov(Y)$ is invertible, and that
$$
\Lambda(u):=\log\E[e^{\langle u,Y\rangle}]<\infty
$$
for every $u$ in a neighborhood of $0$. Then the sequence of random vectors $\frac{1}{s_n\sqrt{n}}\sum_{i=1}^nY_i$, $n\in\N$, satisfies an LDP (i.e.,\ an MDP as the sum is scaled by $s_n\sqrt{n}$) with speed $s_n^2$  and rate function $I(x)={1\over 2}\langle x,\bC^{-1}x\rangle$, $x\in\R^d$.
\end{thm}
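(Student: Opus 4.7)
The plan is to apply the Gärtner--Ellis theorem to $S_n := \tfrac{1}{s_n\sqrt{n}}\sum_{i=1}^n Y_i$. By the i.i.d.\ assumption, the normalized logarithmic moment generating function of $S_n$ at the moderate-deviation scale is
$$
\Lambda_n(u) := \frac{1}{s_n^2}\log\E\big[e^{s_n^2\langle u,\,S_n\rangle}\big] = \frac{n}{s_n^2}\,\Lambda\!\left(\frac{s_n}{\sqrt{n}}u\right),\qquad u\in\R^d.
$$
The argument then splits into two tasks: (a) showing that $\Lambda_n(u)$ converges pointwise to a candidate limiting function, and (b) verifying the hypotheses of Gärtner--Ellis so that the rate function can be identified with the Legendre transform of that limit.

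For (a), the hypothesis that $\Lambda$ is finite on an open neighborhood of the origin allows standard convex-analytic arguments to conclude that $\Lambda$ is $C^\infty$ near $0$, with $\Lambda(0)=0$, $\nabla\Lambda(0)=\E[Y]=0$ (by centeredness), and $\nabla^2\Lambda(0)=\bC$. A second-order Taylor expansion yields
$$
\Lambda(v) = \tfrac{1}{2}\langle v,\bC v\rangle + R(v),\qquad R(v)=o(\|v\|^2)\text{ as }v\to 0.
$$
Substituting $v = (s_n/\sqrt{n})u$ and using the moderate scaling $1\ll s_n\ll\sqrt{n}$, the quadratic term contributes $\tfrac{1}{2}\langle u,\bC u\rangle$ exactly, while the remainder contributes $\tfrac{n}{s_n^2}\cdot o(s_n^2/n)=o(1)$. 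Hence $\Lambda_n(u)\to\Lambda_*(u):=\tfrac{1}{2}\langle u,\bC u\rangle$ for every $u\in\R^d$.

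For (b), the limit $\Lambda_*$ is finite and $C^\infty$ on all of $\R^d$, hence lower semi-continuous, everywhere differentiable, and essentially smooth in the sense required by Gärtner--Ellis. The theorem then delivers an LDP at speed $s_n^2$ with good rate function $I=\Lambda_*^*$. Since $\bC$ is positive definite by assumption, the Legendre transform is computed by setting the gradient of $u\mapsto\langle u,x\rangle-\tfrac{1}{2}\langle u,\bC u\rangle$ to zero, giving $u=\bC^{-1}x$ and therefore $I(x)=\tfrac{1}{2}\langle x,\bC^{-1}x\rangle$, as claimed. The main delicate point is the Taylor step: one must know that $(s_n/\sqrt{n})u$ eventually lies inside the neighborhood on which $\Lambda$ is finite (hence $C^2$), but this is automatic from $s_n/\sqrt{n}\to 0$. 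In particular, no uniformity, change-of-measure, or exponential tightness argument beyond what is already packaged inside Gärtner--Ellis is needed.
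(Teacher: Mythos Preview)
Your argument via G\"artner--Ellis is correct and is the standard route to this moderate deviation principle. However, note that the paper does not supply its own proof of this statement: Theorem~\ref{lem:MDPVector} is quoted in the preliminaries as a known result, attributed to Petrov \cite{P1975} with a pointer to \cite[Theorem 3.7.1]{DZ2010}, and is used as a black box throughout. So there is no ``paper's proof'' to compare against; your write-up is in fact essentially the argument one finds in the Dembo--Zeitouni reference the authors cite.
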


Next, assume that a sequence $(X_n)_{n\in\N}$ of random variables satisfies an LDP with speed $s_n$ and rate function $I$. Suppose now that $(Y_n)_{n\in\N}$ is a sequence of random variables that are `close' to the ones from the first sequence. The next result provides conditions under which in such a situation an LDP from the first can be transferred to the second sequence, see \cite[Theorem 4.2.13]{DZ2010}.

\begin{lemma}[Exponential equivalence]\label{lem:exponentially equivalent}
Let $(X_n)_{n\in\N}$ and $(Y_n)_{n\in\N}$ be two sequences of random vectors in $\R^d$ and assume that $(X_n)_{n\in\N}$ satisfies an LDP on $\R^d$ with speed $s_n$ and rate function $I$. Further, suppose that the two sequences $(X_n)_{n\in\N}$ and $(Y_n)_{n\in\N}$ are exponentially equivalent, i.e.,
$$
\limsup_{n\to\infty}s_n^{-1}\log\Pro\big[\|X_n-Y_n\|_2>\delta\big] = -\infty
$$
for any $\delta\in(0,\infty)$. Then $(Y_n)_{n\in\N}$ satisfies an LDP on $\R^d$ with the same speed and the same rate function.
\end{lemma}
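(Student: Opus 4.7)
The plan is to verify the two standard halves of the large deviation principle for $(Y_n)_{n\in\N}$ separately: the upper bound on closed sets and the lower bound on open sets. The guiding idea is that for any closed $F\subseteq\R^d$ and any $\delta\in(0,\infty)$, the event $\{Y_n\in F\}$ is contained in $\{X_n\in F^{(\delta)}\}\cup\{\|X_n-Y_n\|_2>\delta\}$, where $F^{(\delta)}:=\{y\in\R^d\,:\,d(y,F)\le\delta\}$ is the closed $\delta$-inflation of $F$; by exponential equivalence the second event decays faster than $e^{-c\,s_n}$ for every $c\in(0,\infty)$, so only the first survives on the logarithmic scale. An analogous inclusion based on small open balls around near-minimizers of $I$ will handle open sets.

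For the upper bound, I would fix a closed set $F$ and $\delta>0$ and estimate
\[
\Pro\bigl[Y_n\in F\bigr]\le \Pro\bigl[X_n\in F^{(\delta)}\bigr]+\Pro\bigl[\|X_n-Y_n\|_2>\delta\bigr].
\]
Combining the elementary identity $\limsup_{n\to\infty}s_n^{-1}\log(a_n+b_n)=\max\bigl\{\limsup s_n^{-1}\log a_n,\,\limsup s_n^{-1}\log b_n\bigr\}$ with the LDP upper bound for $(X_n)_{n\in\N}$ on the closed set $F^{(\delta)}$ and the exponential equivalence hypothesis yields
\[
\limsup_{n\to\infty}s_n^{-1}\log\Pro\bigl[Y_n\in F\bigr]\le -\inf_{x\in F^{(\delta)}}I(x).
\]
Letting $\delta\downarrow 0$ and invoking the goodness of $I$ (lower semi-continuity plus compactness of sublevel sets) gives $\inf_{F^{(\delta)}}I\nearrow\inf_F I$, closing this half.

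For the lower bound, I would fix an open $G\subseteq\R^d$ and, for any $\eps>0$, pick $x_0\in G$ with $I(x_0)<\inf_{x\in G}I(x)+\eps$. Choosing $\delta>0$ small enough so that the open ball $B(x_0,\delta)$ lies inside $G$, the inclusion $\{X_n\in B(x_0,\delta/2)\}\subseteq\{Y_n\in G\}\cup\{\|X_n-Y_n\|_2>\delta/2\}$ leads, after the same log-sum manipulation and the LDP lower bound for $(X_n)_{n\in\N}$ applied to the open ball $B(x_0,\delta/2)$, to
\[
\liminf_{n\to\infty}s_n^{-1}\log\Pro\bigl[Y_n\in G\bigr]\ge -I(x_0)\ge -\inf_{x\in G}I(x)-\eps.
\]
Sending $\eps\downarrow 0$ then finishes the argument.

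The main obstacle, and the only place the assumptions on $I$ are truly used, is the passage $\inf_{F^{(\delta)}}I\to\inf_F I$ as $\delta\downarrow 0$ for closed $F$: without lower semi-continuity and compact sublevel sets, a minimizing sequence in the shrinking inflations could escape to infinity or accumulate outside $F$, and the upper-bound half would break down. The rest of the proof is a mechanical combination of the obvious inclusion, the log-sum principle, and the hypothesis of super-exponential closeness.
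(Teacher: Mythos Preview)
The paper does not supply its own proof of this lemma: it is simply quoted as a known result with a reference to \cite[Theorem~4.2.13]{DZ2010} (Dembo--Zeitouni). Your argument is precisely the standard proof found there --- inflating closed sets for the upper bound, shrinking open balls around near-minimizers for the lower bound, and using the log-sum principle together with super-exponential closeness --- so there is nothing to compare.

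One small comment on the write-up: in the lower-bound half the ``same log-sum manipulation'' is not literally the identity $\limsup s_n^{-1}\log(a_n+b_n)=\max\{\cdot,\cdot\}$ you used for the upper bound, since now you start from $a_n\le b_n+c_n$ and want a \emph{lower} bound on $b_n$. The correct step is to note that $c_n\le e^{-Ms_n}$ eventually for every $M$, while $a_n\ge e^{-(I(x_0)+\eps)s_n}$ eventually by the LDP lower bound for $(X_n)$; hence $b_n\ge a_n-c_n\ge a_n/2$ for large $n$, and the $\liminf$ passes through. This is routine, but worth spelling out since it is not quite ``the same'' maneuver.
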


\section{The asymptotic thin-shell width concentration for Orlicz balls}

We start with a few technical preparations. The first result relates a thin-shell estimate for points chosen uniformly at random from an Orlicz ball with tail bounds for a modified Gibbs distribution.

\begin{lemma}\label{lem:WidthAnyt}
Let $M:\R\to[0,\infty)$ be an Orlicz function, $R\in(0,\infty)$, $n\in\N$. Let $\varphi_M:(-\infty,0)\to\R$ be the log-partition function with potential $M$, and let $(Z_i)_{i\in\N}$ be a sequence of independent random variables with Gibbs density $p_M$, i.e.,
$$
\varphi_M(\alpha)=\log \int_\R e^{\alpha M(x)}dx\quad\textrm{and}\quad p_M(x)=\frac{e^{\alpha_* M(x)}}{ \int_\R e^{\alpha_* M(x)}dx}.
$$
where $\alpha_*\in(-\infty,0)$ is chosen such that $\varphi_M^\prime(\alpha_*)=R$. Let $Y_i^{(2)}:=Z_i^2-L_Z^2$, $i\in\N$, where
$$
L_Z^2:=\frac{\int_\R x^2e^{\alpha_* M(x)}dx}{\int_\R e^{\alpha_* M(x)}dx}.
$$
Then, if $X_n$ is a random vector uniformly distributed on $B_M^n(nR)$, we have, for every $t\in(0,\infty)$ that, as $n\to\infty$,
$$
\Pro\left[\Bigg|\frac{\Vert X_n\Vert_2^2}{n}-L_Z^2\Bigg|\geq t\right]\leq|\alpha_*|\sqrt{2\pi n\,\varphi_M^{\prime\prime}(\alpha_*)}\,\Pro\left[\Bigg|\frac{1}{n}\sum_{i=1}^n Y_i^{(2)}\Bigg|\geq t\right]\big(1+o(1)\big),
$$
where the sequence $o(1)$ does not depend on $t$.
\end{lemma}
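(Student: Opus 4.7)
The plan is to exploit the exact representation of the uniform measure on $B_M^n(nR)$ as an i.i.d.-Gibbs-with-constraint computation in the spirit of \cite{KP2020}. Writing the probability of interest as a ratio of two volumes,
\begin{equation*}
\Pro\!\left[\Bigg|\frac{\Vert X_n\Vert_2^2}{n}-L_Z^2\Bigg|\geq t\right]=\frac{\vol_n\!\big(\{z\in B_M^n(nR):|\tfrac{1}{n}\|z\|_2^2-L_Z^2|\geq t\}\big)}{\vol_n(B_M^n(nR))},
\end{equation*}
I will process both integrals simultaneously by inserting the Gibbs density $p_M$. Since $dz=e^{n\varphi_M(\alpha_*)}\prod_i e^{-\alpha_*M(z_i)}p_M(z_i)\,dz$, any integral $\int_{B_M^n(nR)}F(z)\,dz$ equals
\begin{equation*}
e^{n\varphi_M(\alpha_*)-n\alpha_* R}\,\E\!\left[F(Z_1,\dots,Z_n)\,e^{-\alpha_*\sum_i Y_i}\mathbf{1}_{\sum_i Y_i\leq 0}\right],
\end{equation*}
where $Y_i=M(Z_i)-R$ and $(Z_i)$ are i.i.d.\ with density $p_M$. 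Applied with $F\equiv 1$ this gives the denominator, and with $F=\mathbf{1}_{|n^{-1}\sum_i(Z_i^2-L_Z^2)|\geq t}$ it gives the numerator.

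The single key observation — and the only ``trick'' of the proof — is that on $\{\sum_i Y_i\leq 0\}$ the tilt factor $e^{-\alpha_*\sum_i Y_i}$ is bounded by $1$, because $-\alpha_*>0$. Using this pointwise domination in the numerator allows me to discard \emph{both} the tilt factor and the constraint indicator at once, since the remaining event $\{|n^{-1}\sum_i Y_i^{(2)}|\geq t\}$ depends only on the unconstrained i.i.d.\ sequence $(Z_i)$. This yields the clean bound
\begin{equation*}
\vol_n\!\big(\{z\in B_M^n(nR):|\tfrac{1}{n}\|z\|_2^2-L_Z^2|\geq t\}\big)\leq e^{n\varphi_M(\alpha_*)-n\alpha_* R}\,\Pro\!\left[\Bigg|\frac{1}{n}\sum_{i=1}^n Y_i^{(2)}\Bigg|\geq t\right].
\end{equation*}

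For the denominator I invoke the asymptotic identity \eqref{eq:asymptotic expectation} already established in \cite{KP2020}, which gives
\begin{equation*}
\vol_n(B_M^n(nR))=e^{n\varphi_M(\alpha_*)-n\alpha_* R}\cdot\frac{1+o(1)}{|\alpha_*|\sqrt{2\pi n\,\varphi_M^{\prime\prime}(\alpha_*)}}.
\end{equation*}
Forming the ratio, the prefactor $e^{n\varphi_M(\alpha_*)-n\alpha_* R}$ cancels and I recover precisely the constant $|\alpha_*|\sqrt{2\pi n\,\varphi_M^{\prime\prime}(\alpha_*)}\,(1+o(1))$ multiplying the tail probability in the statement. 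Crucially, the $o(1)$ factor is inherited entirely from \eqref{eq:asymptotic expectation}, which makes no reference whatsoever to $t$, so the error term is automatically uniform in $t$ as required.

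There is no serious obstacle: the whole proof reduces to the domination $e^{-\alpha_*\sum_i Y_i}\mathbf{1}_{\sum_i Y_i\leq 0}\leq 1$ plus one application of \eqref{eq:asymptotic expectation}. The only point worth emphasizing is that this very crude $L^\infty$ bound on the tilt is \emph{exactly} what is needed, because the price it extracts — a prefactor of order $\sqrt{n}$ — is negligible against the exponentially small tail probability that the MDP/Bernstein tools in the following sections will supply for $\Pro[|\tfrac{1}{n}\sum_i Y_i^{(2)}|\geq t]$.
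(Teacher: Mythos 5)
Your proposal is correct and follows essentially the same route as the paper: rewrite numerator and denominator as tilted expectations over i.i.d.\ Gibbs variables, use the pointwise bound $e^{-\alpha_*\sum_i Y_i^{(1)}}\mathbf{1}_{\sum_i Y_i^{(1)}\leq 0}\leq 1$ to discard the tilt and constraint in the numerator, and apply the asymptotic formula \eqref{eq:asymptotic expectation} to the denominator. The only cosmetic difference is that you carry the prefactor $e^{n\varphi_M(\alpha_*)-n\alpha_* R}$ explicitly and cancel it, whereas the paper works directly with the ratio of expectations; your uniformity-in-$t$ remark is also exactly the paper's justification.
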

\begin{proof}
Let $(Z_i)_{i=1}^n$ be independent identically distributed copies of a (symmetric) random variable $Z$ with Gibbs density $p_M$. For any $1\leq i\leq n$ consider the centered random variables $Y_i^{(1)}$ and $Y_i^{(2)}$ defined by
\[
Y_i^{(1)}=M(Z_i)-R \qquad \text{and} \qquad Y_i^{(2)}=Z_i^2-L_Z^2,
\]
where $R\in(0,\infty)$ and
\[
  L_Z^2=\E[Z^2]=\int_\R x^2p_M(x)\,dx=\frac{\int_\R x^2e^{\alpha_* M(x)}dx}{\int_\R e^{\alpha_* M(x)}dx}.
\]
If $X_n$ is a random vector uniformly distributed on $B_M^n(nR)$, then, for any $t>0$, we have
\begin{eqnarray*}
&&\Pro\left[\Bigg|\frac{\Vert X_n\Vert_2^2}{n}-L_Z^2\Bigg|\geq t\right]=\frac{\int_{\R^n}\chi_{B_M^n(nR)}(x)\chi_{\R\setminus(L_Z^2-t,L_Z^2+t)}\left(\frac{1}{n}\Vert x\Vert_2^2\right)dx}{\int_{\R^n}\chi_{B_M^n(nR)}(x)dx}\cr
&=&\frac{\int_{\R^n}\chi_{B_M^n(nR)}(x)\chi_{\R\setminus(n(L_Z^2-t),n(L_Z^2+t))}\left(\Vert x\Vert_2^2\right)e^{-\alpha_*\sum_{i=1}^nM(x_i)+n\varphi(\alpha_*)}\prod_{i=1}^np_M(x_i)dx}{\int_{\R^n}\chi_{B_M^n(nR)}(x)e^{-\alpha_*\sum_{i=1}^nM(x_i)+n\varphi(\alpha_*)}\prod_{i=1}^np_M(x_i)dx}\cr
&=&\frac{\E\left[\chi_{B_M^n(nR)}\left((Z_1,\dots,Z_n)\right)\chi_{\R\setminus(n(L_Z^2-t),n(L_Z^2+t))}\left(\Vert (Z_1,\dots, Z_n)\Vert_2^2\right)e^{-\alpha_*\sum_{i=1}^nM(Z_i)}\right]}{\E\left[\chi_{B_M^n(nR)}\left((Z_1,\dots,Z_n)\right)e^{-\alpha_*\sum_{i=1}^nM(Z_i)}\right]}\cr
&=&\frac{\E\left[\chi_{(-\infty,0]}\left(\sum_{i=1}^nY_i^{(1)}\right)\chi_{\R\setminus(-nt,nt)}\left(\sum_{i=1}^n Y_i^{(2)}\right)e^{-\alpha_*\sum_{i=1}^nY_i^{(1)}}\right]}{\E\left[\chi_{(-\infty,0]}\left(\sum_{i=1}^nY_i^{(1)}\right)e^{-\alpha_*\sum_{i=1}^nY_i^{(1)}}\right]}.
\end{eqnarray*}

Since $\alpha_*\in(-\infty,0)$, we have that
\begin{eqnarray*}
&&\E\left[\chi_{(-\infty,0]}\left(\sum_{i=1}^nY_i^{(1)}\right)\chi_{\R\setminus(-nt,nt)}\left(\sum_{i=1}^n Y_i^{(2)}\right)e^{-\alpha_*\sum_{i=1}^nY_i^{(1)}}\right]\cr
&\leq&\E\left[\chi_{(-\infty,0]}\left(\sum_{i=1}^nY_i^{(1)}\right)\chi_{\R\setminus(-nt,nt)}\left(\sum_{i=1}^n Y_i^{(2)}\right)\right]\cr
&\leq&\E\left[\chi_{\R\setminus(-nt,nt)}\left(\sum_{i=1}^n Y_i^{(2)}\right)\right]=\Pro\left[\Bigg|\frac{1}{n}\sum_{i=1}^n Y_i^{(2)}\Bigg|\geq t\right].
\end{eqnarray*}
The proof of \cite[Proposition 3.2]{KP2020} (see Equation \eqref{eq:asymptotic expectation} above)
yields that, as $n\to\infty$,
\[
\E\left[\chi_{(-\infty,0]}\left(\sum_{i=1}^nY_i^{(1)}\right)e^{-\alpha_*\sum_{i=1}^nY_i^{(1)}}\right]=\frac{1+o(1)}{|\alpha_*|\sqrt{2\pi n\varphi_M^{\prime\prime}(\alpha_*)}}
\]
and thus, as $n\to\infty$,
\[
\Pro\left[\Bigg|\frac{\Vert X_n\Vert_2^2}{n}-L_Z^2\Bigg|\geq t\right]\leq|\alpha_*|\sqrt{2\pi n\varphi_M^{\prime\prime}(\alpha_*)}\,\Pro\left[\Bigg|\frac{1}{n}\sum_{i=1}^n Y_i^{(2)}\Bigg|\geq t\right]\big(1+o(1)\big).
\]
This completes the proof.
\end{proof}

The following lemma establishes a thin-shell concentration estimate around a sphere of radius $\sqrt{n}L_Z$. As we shall see later, it can be improved under some growth assumptions on the Orlicz function $M$.

\begin{lemma}\label{lem: thin-shell concetration via chebyshev}
Let $M:\R\to[0,\infty)$ be an Orlicz function, $n\in\N$, $R\in(0,\infty)$, and $\varphi_M:(-\infty,0)\to\R$ be the log-partition function with potential $M$, and let $(Z_i)_{i\in\N}$ be a sequence of independent random variables with Gibbs density $p_M$, i.e.,
$$
\varphi_M(\alpha)=\log \int_\R e^{\alpha M(x)}dx\quad\textrm{and}\quad p_M(x)=\frac{e^{\alpha_* M(x)}}{ \int_\R e^{\alpha_* M(x)}dx},
$$
where $\alpha_*\in(-\infty,0)$ is chosen such that $\varphi_M^\prime(\alpha_*)=R$.
Let $Y_i^{(2)}:=Z_i^2-L_Z^2$, $i\in\N$, where
$$
L_Z^2:=\frac{\int_\R x^2e^{\alpha_* M(x)}dx}{\int_\R e^{\alpha_* M(x)}dx}.
$$
Then, if $X_n$ is a random vector uniformly distributed on $B_M^n(nR)$, we have, for every $t\in(0,\infty)$ that, as $n\to\infty$,
$$
\Pro\left[\Bigg|\frac{\Vert X_n\Vert_2^2}{n}-L_Z^2\Bigg|\geq t\right]\leq\frac{|\alpha_*|\,\Var[Y_1^{(2)}]\sqrt{2\pi\varphi_M^{\prime\prime}(\alpha_*)}}{t^2\sqrt{n}}\big(1+o(1)\big),
$$
where the sequence $o(1)$ does not depend on $t$.
\end{lemma}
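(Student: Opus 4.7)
The plan is to combine the previous lemma, Lemma~\ref{lem:WidthAnyt}, with the classical Chebyshev inequality applied to the i.i.d.\ sum $\sum_{i=1}^n Y_i^{(2)}$. Lemma~\ref{lem:WidthAnyt} already reduces the thin-shell estimate for $X_n$ to a tail bound for the centered i.i.d.\ sum, up to the multiplicative factor $|\alpha_*|\sqrt{2\pi n\,\varphi_M''(\alpha_*)}(1+o(1))$. Since $Y_1^{(2)} = Z_1^2 - L_Z^2$ is centered by construction, there is nothing left to do beyond controlling the variance of the empirical mean.

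Concretely, I would first observe that $\Var[Y_1^{(2)}] = \Var[Z_1^2]$ is finite: the Gibbs density $p_M$ satisfies $p_M(x)\propto e^{\alpha_* M(x)}$ with $\alpha_*<0$ and $M$ a convex Orlicz function, so $M(x)\to\infty$ at least linearly in $|x|$ as $|x|\to\infty$ (by convexity and $M(0)=0$, $M(x)>0$ for $x\neq 0$), whence $e^{\alpha_* M(x)}$ decays at least exponentially and all polynomial moments of $Z$ are finite. Then, by Chebyshev's inequality and independence,
\begin{equation*}
\Pro\left[\Bigg|\frac{1}{n}\sum_{i=1}^n Y_i^{(2)}\Bigg|\geq t\right]\leq \frac{\Var\!\left[\frac{1}{n}\sum_{i=1}^n Y_i^{(2)}\right]}{t^2}=\frac{\Var[Y_1^{(2)}]}{n\,t^2}.
\end{equation*}

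Plugging this bound into the conclusion of Lemma~\ref{lem:WidthAnyt} yields, as $n\to\infty$,
\begin{equation*}
\Pro\left[\Bigg|\frac{\Vert X_n\Vert_2^2}{n}-L_Z^2\Bigg|\geq t\right]\leq |\alpha_*|\sqrt{2\pi n\,\varphi_M''(\alpha_*)}\cdot\frac{\Var[Y_1^{(2)}]}{n\,t^2}\big(1+o(1)\big)=\frac{|\alpha_*|\,\Var[Y_1^{(2)}]\sqrt{2\pi\,\varphi_M''(\alpha_*)}}{t^2\sqrt{n}}\big(1+o(1)\big),
\end{equation*}
which is the stated bound. The $o(1)$ term is inherited verbatim from Lemma~\ref{lem:WidthAnyt} and therefore does not depend on $t$.

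There is no genuine obstacle here; the proof is a one-line consequence of Lemma~\ref{lem:WidthAnyt} together with Chebyshev. The only mild point worth flagging is the finiteness of $\Var[Z^2]$, which is immediate from the exponential decay of $p_M$. Note also that this bound is weaker than what one can hope for with moderate deviation techniques, which is why the subsequent sharper results (Theorems~B and~D) replace Chebyshev by the Eichelsbacher--L\"owe MDP (Theorem~\ref{lem:eicheslbacher-loewe}) under an additional growth assumption $M\in\Omega(x^2)$ ensuring the relevant exponential moments exist.
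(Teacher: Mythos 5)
Your proof is correct and follows exactly the paper's own argument: reduce to a tail bound for the i.i.d.\ sum via Lemma~\ref{lem:WidthAnyt}, then apply Chebyshev's inequality, with finiteness of $\Var[Z^2]$ guaranteed by the at-least-linear growth of $M$ (equivalently, the monotonicity of $M(t)/t$) forcing exponential decay of $p_M$. No differences worth noting.
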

\begin{proof}
Since $M$ is an Orlicz function, $(0,\infty)\ni t \mapsto M(t)/t$ is monotone increasing. Therefore, it is easy to see that $\Var[Y_1^{(2)}] \leq \E[Z_1^4]+L_Z^4<+\infty$. This means that we can apply Chebyshev's inequality, which, combined with the independence of the random variables $Y_i^{(2)}$, yields for any $t\in(0,\infty)$ that
\[
\Pro\Bigg[\Bigg|\sum_{i=1}^n Y_i^{(2)}\Bigg|\geq nt\Bigg] = \Pro\Bigg[\Bigg(\sum_{i=1}^n Y_i^{(2)}\Bigg)^2\geq n^2t^2\Bigg]
 \leq \frac{n\Var[Y_1^{(2)}]}{n^2t^2}=\frac{\Var[Z^2]}{nt^2}.
\]
Therefore, for any $t\in(0,\infty)$ we have that, as $n\to\infty$,
$$
\Pro\left[\Bigg|\frac{\Vert X_n\Vert_2^2}{n}-L_Z^2\Bigg|\geq t\right]\leq\frac{|\alpha_*|\Var[Y_1^{(2)}]\sqrt{2\pi\varphi_M^{\prime\prime}(\alpha_*)}}{t^2\sqrt{n}}\big(1+o(1)\big),
$$
where the sequence $o(1)$ does not depend on $t$. This completes the proof.
\end{proof}

We are now going to prove Theorem \ref{thm:concentration thin-shell orlicz}. Compared to the previous lemma, we add a growth condition on the Orlicz function $M$ so that we can use a result on moderate deviation principles for sums of independent random variables due to Petrov \cite{P1975}. This allows us to improve significantly upon the bound presented in Lemma \ref{lem: thin-shell concetration via chebyshev}. The mentioned growth condition is in particular satisfied by all $2$-convex Orlicz functions, i.e., those for which $M(\sqrt{\cdot})$ is a convex function.

\begin{proof}[Proof of Theorem \ref{thm:concentration thin-shell orlicz}]
Since $M\in\Omega(x^2)$ as $x\to\infty$, we have that if $Y_i^{(2)}=Z_i^2-L_Z^2$ for every $i\in\N$, then
$$
\E\big[e^{uY_1^{(2)}}\big]=e^{-uL_Z^2}\E \big[e^{uZ_1^2}\big]=\frac{e^{-uL_Z^2}\int_{\R}e^{ux^2e^{\alpha_*M(x)}}dx}{\int_\R e^{\alpha_*M(x)}dx} <\infty
$$
for all $u$ in a neighborhood of $0$. By Petrov's moderate deviations result from \cite{P1975} (which we apply now in the form of Theorem \ref{lem:MDPVector} in dimension $d=1$), 
we have that if $(s_n)_{n\in\N}\in(0,\infty)^\N$ is a sequence such that $1\ll s_n\ll \sqrt{n}$, then the sequence of random variables
$$
\Big(\frac{1}{s_n\sqrt{n}}\sum_{i=1}^n Y_i^{(2)}\Big)_{n\in\N}
$$
satisfies an MDP with speed $s_n^2$ and rate function $I:\R\to[0,\infty]$, $I(x):=\frac{x^2}{2\Var[Y_1^{(2)}]}=\frac{x^2}{2\Var[Z^2]}$. Using this with $s_n=t_n\sqrt{n}$ (which indeed means that $1\ll s_n\ll \sqrt{n}$) and the fact that $(-1,1)^c=\R\setminus (-1,1)$ is an $I$-continuity set, we get
\begin{align*}
\lim_{n\to\infty} \frac{1}{s_n^2}\log\, \Pro\left[\Bigg|\frac{1}{n}\sum_{i=1}^n Y_i^{(2)}\Bigg|\geq t_n\right] & = \lim_{n\to\infty} \frac{1}{s_n^2}\log\, \Pro\left[\Bigg|\frac{1}{s_n\sqrt{n}}\sum_{i=1}^n Y_i^{(2)}\Bigg|\geq 1\right] = -I(1).
\end{align*}
As $n\to\infty$, this translates to
\begin{eqnarray*}
\Pro\left[\Bigg|\frac{1}{n}\sum_{i=1}^n Y_i^{(2)}\Bigg|\geq t_n\right]
=e^{-I(1)s_n^2\,(1+o(1))}
= e^{-\frac{t_n^2n(1+o(1))}{2\Var[Z^2]}}.
\end{eqnarray*}
Putting everything together, we obtain, as $n\to\infty$,
$$
\Pro\left[\Bigg|\frac{\Vert X_n\Vert_2^2}{n}-L_Z^2\Bigg|\geq t_n\right]\leq|\alpha_*|\sqrt{2\pi n\,\varphi_M^{\prime\prime}(\alpha_*)}e^{-\frac{t_n^2n(1+o(1))}{2\Var[Z^2]}}\big(1+o(1)\big)
$$
or, equivalently,
$$
\Pro\left[\Bigg|\frac{\Vert X_n\Vert_2^2}{nL_Z^2}-1\Bigg|\geq t_n\right]\leq|\alpha_*|\sqrt{2\pi n\,\varphi_M^{\prime\prime}(\alpha_*)}e^{-\frac{t_n^2nL_Z^4(1+o(1))}{2\Var[Z^2]}}\big(1+o(1)\big),
$$
which completes the proof.
\end{proof}

\begin{rmk}
We see from the proof of Theorem \ref{thm:concentration thin-shell orlicz} that for each sequence $\frac{1}{\sqrt{n}}\ll t_n\ll1$, the constant in the exponential, $\frac{1+o(1)}{\Var (Z^2)}$, is asymptotically sharp for the estimate of $\Pro\left[\left|\frac{1}{n}\sum_{i=1}^n Y_i^{(2)}\right|\geq t_n\right]$. However, the sequence $o(1)$ in the exponent depends on the sequence $(s_n)_{n\in\N}$ and so on $(t_n)_{n=1}^\infty$.  Alternatively, since $M\in\Omega(x^2)$ as $x\to\infty$, we have that there exists $\lambda>0$ such that
$$
\E\big[e^{Y_1^{(2)}/\lambda}\big]=e^{-\frac{L_Z^2}{\lambda}}\E\big[ e^{Z^2/\lambda}\big]=\frac{e^{-\frac{L_Z^2}{\lambda}}\int_{\R}e^{\frac{x^2}{\lambda}}e^{\alpha_*M(x)}dx}{\int_\R e^{\alpha_*M(x)}dx} <\infty.
$$
and then calling $A:=\inf\Big\{\lambda>0\,:\,\E \big[e^{Y_1^{(2)}/\lambda}\big]\leq2\Big\}$  we have by Bernstein's inequality (see Theorem \ref{thm:Bernstein} (i)) that for any $t\in(0,4A)$,
$$
\Pro\left[\Bigg|\frac{1}{n}\sum_{i=1}^n Y_i^{(2)}\Bigg|\geq t\right]\leq 2e^{-\frac{t^2n}{16 A^2}}.
$$
Therefore, by Lemma \ref{lem:WidthAnyt}, for any $t\in(0,4A)$,
$$
\Pro\left[\Bigg|\frac{\Vert X_n\Vert_2^2}{n}-L_Z^2\Bigg|\geq t\right]\leq2|\alpha_*|\sqrt{2\pi n\,\varphi_M^{\prime\prime}(\alpha_*)}e^{-\frac{t^2n}{16A^2}}\big(1+o(1)\big)
$$
or, equivalently, for any $t\in(0,4AL_Z^{-2})$
$$
\Pro\left[\Bigg|\frac{\Vert X_n\Vert_2^2}{nL_Z^2}-1\Bigg|\geq t\right]\leq2|\alpha_*|\sqrt{2\pi n\,\varphi_M^{\prime\prime}(\alpha_*)}e^{-\frac{t^2nL_Z^4}{16A^2}}\big(1+o(1)\big),
$$
where the sequence $o(1)$ does not depend on $t$.

Alernatively, instead of Bernstein's inequality we can make use of Cram\'er's large deviation result (see Theorem \ref{thm:Cramer}) to obtain an upper bound for every $t\in(0,\infty)$ at the cost of having a sequence $o(1)$ depending on $t$ in the exponential.
\end{rmk}

\begin{rmk}\label{rmk:RemoveSquare}
While thin-shell concentration inequalities are usually stated for the random variable $\frac{\Vert X_n\Vert_2}{\sqrt n}$ conveniently normalized, in the above result we obtained a thin-shell concentration inequality for the random variable $\frac{\Vert X_n\Vert_2^2}{nL_Z^2}$. An inequality for the random variable $\frac{\Vert X_n\Vert_2}{\sqrt nL_Z}$ is immediately obtained since for any $t>0$, as $\frac{\Vert X_n\Vert_2}{\sqrt nL_Z}\geq0$, we have that
$$
\Pro\left[\Bigg|\frac{\Vert X_n\Vert_2}{\sqrt{n}L_Z}-1\Bigg|\geq t\right]=\Pro\left[\Bigg|\frac{\Vert X_n\Vert_2^2}{nL_Z^2}-1\Bigg|\geq t\Bigg|\frac{\Vert X_n\Vert_2}{\sqrt{n}L_Z}+1\Bigg|\right]\leq\Pro\left[\Bigg|\frac{\Vert X_n\Vert_2^2}{nL_Z^2}-1\Bigg|\geq t\right],
$$
\end{rmk}


Notice that while Theorem \ref{thm:concentration thin-shell orlicz} shows that in high dimensions if $X_n$ is a random vector uniformly distributed on $B_M^n(nR)$, then $\frac{\Vert X_n\Vert_2}{\sqrt{n}}$ concentrates around $L_Z$, which is (as we will see) the asymptotic value of $\left(\E\left[\frac{\Vert X_n\Vert^2}{n}\right]\right)^{1/2}$, the thin-shell width conjecture is a conjecture on the concentration of $\frac{\Vert X_n\Vert_2}{\sqrt{n}}$ around $\left(\E\left[\frac{\Vert X_n\Vert^2}{n}\right]\right)^{1/2}$. We do not know if, in general, $\left(\E\left[\frac{\Vert X_n\Vert^2}{n}\right]\right)^{1/2}$ converges fast enough to $L_Z$ so that estimates on the concentration around $L_Z$ imply the same asymptotic estimates on the concentration around $\left(\E\left[\frac{\Vert X_n\Vert^2}{n}\right]\right)^{1/2}$. However, the next result shows that under some extra condition on the growth of the function $M$, we can transfer the concentration around $L_Z$ to concentration  around $\left(\E\left[\frac{\Vert X_n\Vert^2}{n}\right]\right)^{1/2}$, like in \eqref{eq:Concentration Guedon-Milman}. Furthermore, in such case the estimate \eqref{eq:Concentration Lee-Vempala} is also improved for large values of $t$.


\begin{thm}\label{thm:concetration thin-shell orlicz around expectation}
Let $M:\R\to[0,\infty)$ be an Orlicz function such that $M(x)=\Omega(x^4)$ as $x\to\infty$, $R>0$, $n\in\N$, $\varphi_M:(-\infty,0)\to\R$ the log-partition function with potential $M$ and let $(Z_i)_{i\in\N}$ be a sequence of independent random variables with Gibbs density $p_M$, i.e.,
$$
\varphi_M(\alpha)=\log \int_\R e^{\alpha M(x)}dx\quad\textrm{and}\quad p_M(x)=\frac{e^{\alpha_* M(x)}}{ \int_\R e^{\alpha_* M(x)}dx},
$$
where $\alpha_*\in(-\infty,0)$ is chosen such that $\varphi_M^\prime(\alpha_*)=R$. Let $Y_i^{(2)}:=Z_i^2-L_Z^2$, $i\in\N$, where
$$
L_Z^2:=\frac{\int_\R x^2e^{\alpha_* M(x)}dx}{\int_\R e^{\alpha_* M(x)}dx}.
$$

Let $X_n$ be a random vector uniformly distributed on $B_M^n(nR)$. Then, for any sequence $(t_n)_{n=1}^\infty \in(0,\infty)^{\N}$, we have that, as $n\to\infty$,
$$
\Pro\left[\Bigg|\frac{\Vert X_n\Vert_2}{\left(\E\Vert X_n\Vert_2^2\right)^{1/2}}-1\Bigg|\geq t_n\right]\leq  4|\alpha_*|\sqrt{2\pi n\,\varphi^{\prime\prime}(\alpha_*)}e^{-\frac{t_n^2nL_Z^4(1+o(1))}{8A^2}}\big(1+o(1)\big),
$$
where the sequences $o(1)$ do not depend on $(t_n)_{n=1}^\infty$ and $A:=\inf\Big\{\lambda>0\,:\,\E\Big[ \exp\Big(\frac{(Y_1^{(2)})^2}{\lambda^2}\Big)\Big] \leq2\Big\}$.
\end{thm}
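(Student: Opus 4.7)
The plan is to reduce, via Remark \ref{rmk:RemoveSquare}, to an estimate on $\Vert X_n\Vert_2^2/a_n^2$ with $a_n^2:=\E\Vert X_n\Vert_2^2$, and then further to one on $\Vert X_n\Vert_2^2/(nL_Z^2)$ which is already under control by Lemma \ref{lem:WidthAnyt} combined with a Bernstein-type tail bound for $\sum_iY_i^{(2)}$. Concretely, non-negativity of $\Vert X_n\Vert_2/a_n$ yields
$$\Pro\!\left[\Big|\tfrac{\Vert X_n\Vert_2}{a_n}-1\Big|\geq t_n\right]\leq \Pro\!\left[\Big|\tfrac{\Vert X_n\Vert_2^2}{a_n^2}-1\Big|\geq t_n\right],$$
so it suffices to bound the right-hand side.

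The key role of the hypothesis $M\in\Omega(x^4)$ is to make $Y_1^{(2)}=Z_1^2-L_Z^2$ sub-Gaussian: writing $M(x)\geq cx^4$ for $|x|$ large, for any $\lambda^2>1/(|\alpha_*|c)$ the integral defining $\E[e^{(Y_1^{(2)})^2/\lambda^2}]$ converges (since then $x^4/\lambda^2+\alpha_*M(x)\leq -\kappa x^4$ for $|x|$ large), so the constant $A$ appearing in the statement is finite. Theorem \ref{thm:Bernstein}(ii) then gives $\Pro[|n^{-1}\sum_iY_i^{(2)}|>t]\leq 2e^{-t^2n/(8A^2)}$ for every $t>0$, which combined with Lemma \ref{lem:WidthAnyt} produces the baseline estimate
$$\Pro\!\left[\Big|\tfrac{\Vert X_n\Vert_2^2}{nL_Z^2}-1\Big|\geq s\right]\leq 2|\alpha_*|\sqrt{2\pi n\,\varphi_M^{\prime\prime}(\alpha_*)}\,e^{-s^2nL_Z^4/(8A^2)}(1+o(1))$$
for every $s>0$, with $o(1)$ independent of $s$.

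To transfer from $nL_Z^2$ to $a_n^2$, set $\delta_n:=a_n^2/(nL_Z^2)-1$ and $U_n:=\Vert X_n\Vert_2^2/(nL_Z^2)-1$. The algebraic identity $\Vert X_n\Vert_2^2/a_n^2-1=(U_n-\delta_n)/(1+\delta_n)$ shows that $|\Vert X_n\Vert_2^2/a_n^2-1|\geq t_n$ implies $|U_n|\geq t_n(1+\delta_n)-|\delta_n|$. That $\delta_n\to 0$ follows from the uniform boundedness of $\Vert X_n\Vert_2^2/n$ (Cauchy--Schwarz applied to $\sum_iM(X_{n,i})\leq nR$ together with $M\in\Omega(x^4)$ yields $\sum_iX_{n,i}^4\leq Cn$, hence $\Vert X_n\Vert_2^2/n\leq\sqrt{C}$) combined with the convergence in probability $\Vert X_n\Vert_2^2/n\to L_Z^2$ from Lemma \ref{lem: thin-shell concetration via chebyshev}, via dominated convergence. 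In the range where the stated upper bound is nontrivial (which forces $t_n$ to be at least of order $\sqrt{\log n/n}$), the ratio $|\delta_n|/t_n$ tends to $0$ uniformly, so $t_n(1+\delta_n)-|\delta_n|=t_n(1-\eta_n)$ with $\eta_n=o(1)$ independent of $t_n$; plugging into the baseline estimate produces the claimed exponent $-t_n^2nL_Z^4(1+o(1))/(8A^2)$, while for smaller $t_n$ the right-hand side of the theorem already exceeds $1$ and there is nothing to prove.

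The main obstacle is obtaining a rate for $\delta_n\to 0$ fine enough to absorb $\eta_n$ into the exponent's $(1+o(1))$ \emph{uniformly} in the admissible $t_n$. A coarse bound $\delta_n=O(\sqrt{\log n/n})$ comes directly from integrating the Bernstein tail of $\Vert X_n\Vert_2^2/n-L_Z^2$ against the uniform bound; a sharper $\delta_n=O(1/\sqrt{n})$ would follow from a local CLT analysis applied to the Gibbs representation $\E[X_{n,1}^2]=\E[Z_1^2W_n]/\E[W_n]$ with $W_n=\chi_{\{\sum_iY_i^{(1)}\leq 0\}}\,e^{-\alpha_*\sum_iY_i^{(1)}}$, which is the technically most delicate ingredient.
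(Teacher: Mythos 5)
Your first two steps (finiteness of $A$ from $M\in\Omega(x^4)$, Bernstein's inequality in the form of Theorem \ref{thm:Bernstein}(ii), and the baseline bound around $nL_Z^2$ via Lemma \ref{lem:WidthAnyt}) are exactly the right ingredients and match the paper. The gap is in the recentering step from $nL_Z^2$ to $a_n^2=\E\|X_n\|_2^2$. Your argument needs $\delta_n/t_n\to 0$ \emph{uniformly over the admissible range of} $t_n$, and you correctly observe that the range where the claimed bound is nontrivial extends down to $t_n\asymp\sqrt{\log n/n}$. But the only rate you actually establish is the coarse one, $\delta_n=O(\sqrt{\log n/n})$, obtained by integrating the Bernstein tail against the uniform bound; at the bottom of the admissible range this gives $\delta_n/t_n=O(1)$, not $o(1)$, so the exponent degrades to $-t_n^2(1-\eta_n)^2nL_Z^4/(8A^2)$ with $\eta_n$ merely bounded, which is not of the claimed form $-t_n^2nL_Z^4(1+o(1))/(8A^2)$. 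The sharper rate $\delta_n=O(1/\sqrt{n})$ that would close this is asserted to follow from a local CLT analysis of the Gibbs representation, but you do not carry it out; as stated, this is a genuine missing step, not a routine one.

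The paper avoids needing any rate for $\delta_n$ by a symmetrization--Jensen device. One takes an independent copy $\overline{X}_n$, uses the triangle inequality through $L_Z^2$ and Lemma \ref{lem:WidthAnyt} to bound the tail of $\big|\|X_n\|_2^2/n-\|\overline{X}_n\|_2^2/n\big|$, and deduces (with $\lambda_n=n/(16A^2)$, via the Bernstein tail) that
\[
\E\Big[e^{\lambda_n\left|\frac{\|X_n\|_2^2}{n}-\frac{\|\overline{X}_n\|_2^2}{n}\right|^2}\Big]\leq 4|\alpha_*|\sqrt{2\pi n\,\varphi_M^{\prime\prime}(\alpha_*)}\big(1+o(1)\big).
\]
Since $y\mapsto e^{\lambda_n|x-y|^2}$ is convex, Jensen's inequality in $\overline{X}_n$ replaces $\|\overline{X}_n\|_2^2/n$ by $\E\|X_n\|_2^2/n$ at no cost, and Markov's inequality then gives the tail bound centered exactly at the expectation. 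The convergence $\E\|X_n\|_2^2/n\to L_Z^2$ enters only at the very end, to rewrite $(\E\|X_n\|_2^2/n)^2=L_Z^4(1+o(1))$ inside the exponent, where no rate is required because the factor multiplies the entire exponent. If you want to keep your direct recentering route, you must actually prove $\delta_n=o(t_n)$ down to $t_n\asymp\sqrt{\log n/n}$ (e.g.\ $\delta_n=O(1/\sqrt{n})$); otherwise the symmetrization argument is the cleaner way through.
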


\begin{proof}
First we observe that by Lemma \ref{lem:WidthAnyt}, if $X_n$ and $\overline{X}_n$ are two independent random vectors uniformly distributed on $B_M^n(nR)$ then, for every $t\in(0,\infty)$, we obtain, as $n\to\infty$,
\begin{eqnarray}\label{eq:tail bound x and x bar}
&&\Pro\left[\Bigg|\frac{\Vert X_n\Vert_2^2}{n}-\frac{\Vert \overline{X}_n\Vert_2^2}{n}\Bigg|>t\right]
\leq\Pro\left[\Bigg|\frac{\Vert X_n\Vert_2^2}{n}-L_Z^2\Bigg|>t\right]+\Pro\left[\Bigg|\frac{\Vert \overline{X}_n\Vert_2^2}{n}-L_Z^2\Bigg|>t\right]\cr
&\leq&2|\alpha_*|\sqrt{2\pi n\varphi_M^{\prime\prime}(\alpha_*)}\,\Pro\left[\Bigg|\frac{1}{n}\sum_{i=1}^n Y_i^{(2)}\Bigg|\geq t\right]\big(1+o(1)\big),
\end{eqnarray}
where the sequence $o(1)$ does not depend on $t$. Using \eqref{eq:tail bound x and x bar}, we obtain that for any sequence  $(\lambda_n)_{n=1}^\infty$, as $n\to\infty$,
\begin{align}\label{eq:expectation bound lambda n}
\E \Bigg[e^{\lambda_n\left|\frac{\Vert X_n\Vert_2^2}{n}-\frac{\Vert \overline{X}_n\Vert_2^2}{n}\right|^2 }\Bigg]
& \leq 2|\alpha_*|\sqrt{2\pi n\varphi_M^{\prime\prime}(\alpha_*)}\,\E\Big[e^{\lambda_n\left|\frac{1}{n}\sum_{i=1}^n Y_i^{(2)}\right|^2}\Big]\big(1+o(1)\big).
\end{align}

Since $M\in\Omega(x^4)$ as $x\to\infty$, there exists $\lambda>0$ (large enough) such that
$$
\E \big[e^{(Y_1^{(2)})^2/\lambda^2}\big]=\frac{\int_{\R}e^{(x^2-L_Z^2)^2/\lambda^2}e^{\alpha_*M(x)}dx}{\int_\R e^{\alpha_*M(x)}dx} <\infty.
$$
Therefore, we can apply Bernstein's inequality (see Theorem \ref{thm:Bernstein} (ii)) and, letting $A:=\inf\big\{\lambda>0\,:\,\E\big[ e^{(Y_1^{(2)})^2/\lambda^2}\big]\leq 2 \big\}$, we have that, for any $t\geq0$,
$$
\Pro\left[\Bigg|\frac{1}{n}\sum_{i=1}^n Y_i^{(2)}\Bigg|\geq t\right]\leq2e^{-\frac{t^2n}{8A^2}}.
$$
Thus, choosing $\lambda_n:=\frac{n}{16A^2}$, we obtain from the previous estimate that
\begin{eqnarray}\label{eq:expectation exp lambda n bounded by 2}
\E \Big[e^{\lambda_n\left|\frac{1}{n}\sum_{i=1}^n Y_i^{(2)}\right|^2}\Big]
&=& \int_0^\infty 2\lambda_nte^{\lambda_nt^2}\Pro\left[\Bigg|\frac{1}{n}\sum_{i=1}^n Y_i^{(2)}\Bigg|\geq t\right]dt \cr
&\leq& 2\int_0^\infty 2\lambda_nte^{\lambda_nt^2}e^{-\frac{t^2n}{8A^2}}dt
=2\int_0^\infty 2\lambda_nte^{-\lambda_nt^2}dt=2.
\end{eqnarray}
Combining this bound with \eqref{eq:expectation bound lambda n} we obtain that for $\lambda_n=\frac{n}{16A^2}$, as $n\to\infty$,
$$
\E\Bigg[e^{\lambda_n\left|\frac{\Vert X_n\Vert_2^2}{n}-\frac{\Vert \overline{X}_n\Vert_2^2}{n}\right|^2 }\Bigg]\leq 4|\alpha_*|\sqrt{2\pi n\varphi_M^{\prime\prime}(\alpha_*)}\big(1+o(1)\big).
$$
Since for any $\lambda_n>0$, and any $x_0\in\R$ the function $e^{\lambda_n|x_0-\cdot|^2}$ is convex we have, by Jensen's inequality,
$$
\E \Bigg[e^{\lambda_n\left|\frac{\Vert X_n\Vert_2^2}{n}-\frac{\E \Vert X_n\Vert_2^2}{n}\right|^2}\Bigg] \leq \E \Bigg[e^{\lambda_n\left|\frac{\Vert X_n\Vert_2^2}{n}-\frac{\Vert \overline{X}_n\Vert_2^2}{n}\right|^2}\Bigg].
$$
As $n\to\infty$, we obtain from Markov's inequality combined with the previous estimate and \eqref{eq:expectation bound lambda n} that, for any sequence $(t_n)_{n=1}^\infty\in(0,\infty)^\N$,
\begin{eqnarray*}
&&\Pro\left[\Bigg|\frac{\Vert X_n\Vert_2^2}{n}-\frac{\E\big[\Vert X_n\Vert_2^2\big]}{n}\Bigg|\geq t_n\right]
=\Pro\left[e^{\lambda_n\Big|\frac{\Vert X_n\Vert_2^2}{n}-\frac{\E\Vert X_n\Vert_2^2}{n}\Big|^2}\geq e^{\lambda_n t_n^2}\right]
\leq e^{-\lambda_n t_n^2}\E \Bigg[e^{\lambda_n\left|\frac{\Vert X_n\Vert_2^2}{n}-\frac{\E \Vert X_n\Vert_2^2}{n}\right|^2}\Bigg]\cr
&\leq& e^{-\lambda_n t_n^2}\E \Bigg[e^{\lambda_n\left|\frac{\Vert X_n\Vert_2^2}{n}-\frac{\Vert \overline{X}_n\Vert_2^2}{n}\right|^2}\Bigg]
\leq  2|\alpha_*|\sqrt{2\pi n\varphi_M^{\prime\prime}(\alpha_*)}e^{-\lambda_n t_n^2}\E\Big[e^{\lambda_n\left|\frac{1}{n}\sum_{i=1}^n Y_i^{(2)}\right|^2}\Big]\big(1+o(1)\big).
\end{eqnarray*}
Hence, if $\lambda_n:=\frac{n}{16A^2}$ we obtain from \eqref{eq:expectation exp lambda n bounded by 2} that, for any sequence $(t_n)_{n=1}^\infty$,
$$
\Pro\left[\Bigg|\frac{\Vert X_n\Vert_2^2}{\E[\Vert X_n\Vert_2^2]}-1\Bigg|\geq \frac{nt_n}{\E[\Vert X_n\Vert_2^2]}\right]=\Pro\left[\Bigg|\frac{\Vert X_n\Vert_2^2}{n}-\frac{\E[\Vert X_n\Vert_2^2]}{n}\Bigg|\geq t_n\right]\leq  4|\alpha_*|\sqrt{2\pi n\varphi_M^{\prime\prime}(\alpha_*)}e^{-\frac{t_n^2n}{16A^2}}\big(1+o(1)\big),
$$
as $n\to\infty$. Since, like in Remark \ref{rmk:RemoveSquare}, for any $t_n\in(0,\infty)$
$$
\Pro\left[\Bigg|\frac{\Vert X_n\Vert_2}{\left(\E\Vert X_n\Vert_2^2\right)^{1/2}}-1\Bigg|\geq t_n\right]\leq\Pro\left[\Bigg|\frac{\Vert X_n\Vert_2^2}{\E\Vert X_n\Vert_2^2}-1\Bigg|\geq t_n\right],
$$
we obtain that, for any sequence $(t_n)_{n=1}^\infty\in(0,\infty)^\N$,
$$
\Pro\left[\Bigg|\frac{\Vert X_n\Vert_2}{\left(\E\Vert X_n\Vert_2^2\right)^{1/2}}-1\Bigg|\geq t_n\right]\leq 4|\alpha_*|\sqrt{2\pi n\varphi_M^{\prime\prime}(\alpha_*)}e^{-\frac{t_n^2n\left(\frac{\E\Vert X_n\Vert_2^2}{n}\right)^2}{16A^2}}\big(1+o(1)\big).
$$
All that is left to prove is that, as $n\to\infty$, $\frac{\E[\Vert X_n\Vert_2^2]}{n}=L_Z^2(1+o(1))$, which will be proved in Section \ref{sec:IsotropicConstant}.
\end{proof}

\section{Lower bounds on the probability of thin-shell width concentration}

In this section we establish lower bounds on the upper tail concentration probability. In particular, we are able to prove that our probabilistic estimates from Theorem \ref{thm:concentration thin-shell orlicz} are essentially sharp whenever $\frac{1}{n^{1/4}}\ll t_n\ll1$. We start with a first lower bound which reduces the problem to finding a good lower bound on the probability that simultaneously the normalized partial sums of the $Y_i^{(1)}$ and $Y_i^{(2)}$ lie in certain intervals.

\begin{lemma}\label{lem:LowerBoundAnyt}
Let $M:\R\to[0,\infty)$ be an Orlicz function, $R\in(0,\infty)$, $n\in\N$, and $\varphi_M:(-\infty,0)\to\R$ be the log-partition function with potential $M$, and let $(Z_i)_{i\in\N}$ be a sequence of independent random variables with Gibbs density $p_M$, i.e.,
$$
\varphi_M(\alpha)=\log \int_\R e^{\alpha M(x)}dx \quad\textrm{and}\quad p_M(x)=\frac{e^{\alpha_* M(x)}}{ \int_\R e^{\alpha_* M(x)}dx}.
$$
where $\alpha_*\in(-\infty,0)$ is chosen such that $\varphi_M^\prime(\alpha_*)=R$. Let for any $i\in\N$ $Y_i^{(1)}:=M(Z_i)-R$ and $Y_i^{(2)}:=Z_i^2-L_Z^2$,
where
\[
L_Z^2:=\frac{\int_\R x^2e^{\alpha_* M(x)}dx}{\int_\R e^{\alpha_* M(x)}dx}.
\]
If $X_n$ is a random vector uniformly distributed on $B_M^n(nR)$, then, for any $t>0$ and any sequence $(r_n)_{n\in\N}\in(0,\infty)^\N$, as $n\to\infty$
$$
\Pro\left[\Bigg|\frac{\Vert X_n\Vert_2^2}{n}-L_Z^2\Bigg|\geq t\right]
\geq
|\alpha_*|\sqrt{2\pi n\varphi^{\prime\prime}(\alpha_*)}
e^{\alpha_*r_n}\,\Pro\left[-1\leq\frac{1}{r_n}\sum_{i=1}^nY_i^{(1)}\leq0,\Bigg|\frac{1}{n}\sum_{i=1}^n Y_i^{(2)}\Bigg|\geq t\right]\big(1+o(1)\big),
$$
where the sequence $o(1)$ does not depend on $t$ or $(r_n)_{n\in\N}$.
\end{lemma}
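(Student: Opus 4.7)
The plan is to follow the same identity that opens the proof of Lemma~\ref{lem:WidthAnyt}, but instead of upper bounding the numerator by dropping the exponential factor, I would lower bound it by restricting the domain of integration to an event on which $e^{-\alpha_{*}\sum_{i=1}^{n}Y_{i}^{(1)}}$ admits a good pointwise lower bound.

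More precisely, I would start by writing
\[
\Pro\left[\Bigg|\frac{\Vert X_n\Vert_2^2}{n}-L_Z^2\Bigg|\geq t\right]
=\frac{\E\bigl[\chi_{(-\infty,0]}(S_n^{(1)})\,\chi_{\R\setminus(-nt,nt)}(S_n^{(2)})\,e^{-\alpha_{*}S_n^{(1)}}\bigr]}{\E\bigl[\chi_{(-\infty,0]}(S_n^{(1)})\,e^{-\alpha_{*}S_n^{(1)}}\bigr]},
\]
where $S_n^{(j)}:=\sum_{i=1}^{n}Y_{i}^{(j)}$. This is exactly the identity obtained at the start of the proof of Lemma~\ref{lem:WidthAnyt} by expanding $p_M$ and changing the integration variable from $B_M^n(nR)$ to $\R^n$.

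For the denominator the asymptotic formula \eqref{eq:asymptotic expectation} gives
\[
\E\bigl[\chi_{(-\infty,0]}(S_n^{(1)})\,e^{-\alpha_{*}S_n^{(1)}}\bigr]=\frac{1+o(1)}{|\alpha_{*}|\sqrt{2\pi n\varphi''(\alpha_{*})}},
\]
and the $o(1)$ here is independent of $t$ and of the sequence $(r_n)_{n\in\N}$ since it comes from a central-limit-type estimate for the $Y_{i}^{(1)}$ alone. For the numerator I would shrink the event $\{S_n^{(1)}\leq 0\}$ down to $\{-r_n\leq S_n^{(1)}\leq 0\}$. On this smaller event, since $\alpha_{*}<0$, we have $-\alpha_{*}S_n^{(1)}\geq \alpha_{*}r_n$, hence
\[
e^{-\alpha_{*}S_n^{(1)}}\geq e^{\alpha_{*}r_n}
\]
pointwise, and thus
\[
\E\bigl[\chi_{(-\infty,0]}(S_n^{(1)})\,\chi_{\R\setminus(-nt,nt)}(S_n^{(2)})\,e^{-\alpha_{*}S_n^{(1)}}\bigr]
\geq e^{\alpha_{*}r_n}\,\Pro\!\left[-1\leq\tfrac{1}{r_n}S_n^{(1)}\leq 0,\ \bigl|\tfrac{1}{n}S_n^{(2)}\bigr|\geq t\right].
\]

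Dividing by the denominator yields exactly the claimed inequality. There is no real technical obstacle here: the sign bookkeeping (using that $\alpha_{*}<0$ to go from $S_n^{(1)}\geq -r_n$ to a lower bound on $e^{-\alpha_{*}S_n^{(1)}}$) is the only place where care is needed, and the fact that the error term $o(1)$ does not depend on $t$ or $(r_n)$ is automatic because both come solely from the denominator asymptotics in \eqref{eq:asymptotic expectation}, which involves neither $t$ nor $r_n$. The more substantive work, of course, will be performed later when one has to produce an effective lower bound on the joint probability $\Pro[-1\leq r_n^{-1}S_n^{(1)}\leq 0,\,|n^{-1}S_n^{(2)}|\geq t]$ via a bivariate moderate-deviation argument; but this lemma itself is only the reduction step.
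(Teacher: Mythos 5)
Your proposal is correct and follows essentially the same route as the paper's proof: the same change-of-measure identity from Lemma \ref{lem:WidthAnyt}, the same restriction of the event $\{S_n^{(1)}\le 0\}$ to $\{-r_n\le S_n^{(1)}\le 0\}$ with the pointwise bound $e^{-\alpha_*S_n^{(1)}}\ge e^{\alpha_*r_n}$, and the same application of \eqref{eq:asymptotic expectation} to the denominator. The sign bookkeeping and the observation that the $o(1)$ comes only from the denominator are both handled as in the paper.
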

\begin{proof}
As in the proof of Lemma \ref{lem:WidthAnyt}, if $X_n$ is a vector uniformly distributed on $B_M^n(nR)$, then, for any $t\in(0,\infty)$,
\begin{eqnarray*}
\Pro\left[\Bigg|\frac{\Vert X_n\Vert_2^2}{n}-L_Z^2\Bigg| \geq t\right]
&=&\frac{\E\left[\chi_{(-\infty,0]}\left(\sum_{i=1}^nY_i^{(1)}\right)\chi_{\R\setminus(-nt,nt)}\left(\sum_{i=1}^n Y_i^{(2)}\right)e^{-\alpha_*\sum_{i=1}^nY_i^{(1)}}\right]}{\E\left[\chi_{(-\infty,0]}\left(\sum_{i=1}^nY_i^{(1)}\right)e^{-\alpha_*\sum_{i=1}^nY_i^{(1)}}\right]}.
\end{eqnarray*}

Now, for any $r_n\in(0,\infty)$, we have that
\begin{eqnarray*}
&&\E\left[\chi_{(-\infty,0]}\left(\sum_{i=1}^nY_i^{(1)}\right)\chi_{\R\setminus(-nt,nt)}\left(\sum_{i=1}^n Y_i^{(2)}\right)e^{-\alpha_*\sum_{i=1}^nY_i^{(1)}}\right]\cr
&\geq&\E\left[\chi_{(-r_n,0]}\left(\sum_{i=1}^nY_i^{(1)}\right)\chi_{\R\setminus(-nt,nt)}\left(\sum_{i=1}^n Y_i^{(2)}\right)e^{-\alpha_*\sum_{i=1}^nY_i^{(1)}}\right]\cr
&\geq&e^{\alpha_*r_n}\E\left[\chi_{(-r_n,0]}\left(\sum_{i=1}^nY_i^{(1)}\right)\chi_{\R\setminus(-nt,nt)}\left(\sum_{i=1}^n Y_i^{(2)}\right)\right]\cr
&=&e^{\alpha_*r_n}\Pro\left[-1\leq\frac{1}{r_n}\sum_{i=1}^nY_i^{(1)}\leq0,\left|\frac{1}{n}\sum_{i=1}^n Y_i^{(2)}\right|\geq t\right].
\end{eqnarray*}
Again, the proof of \cite[Proposition 3.2]{KP2020} (see Equation \eqref{eq:asymptotic expectation} above) shows that
\[
\E\left[\chi_{(-\infty,0]}\left(\sum_{i=1}^nY_i^{(1)}\right)e^{-\alpha_*\sum_{i=1}^nY_i^{(1)}}\right]=\frac{1+o(1)}{|\alpha_*|\sqrt{2\pi n\varphi^{\prime\prime}(\alpha_*)}},
\]
and so we have that for any sequence $(r_n)_{n\in\N}$
$$
\Pro\left[\Bigg|\frac{\Vert X_n\Vert_2^2}{n}-L_Z^2\Bigg|\geq t\right]\geq|\alpha_*|\sqrt{2\pi n\varphi^{\prime\prime}(\alpha_*)}e^{\alpha_*r_n}\,\Pro\left[-1\leq\frac{1}{r_n}\sum_{i=1}^nY_i^{(1)}\leq0,\Bigg|\frac{1}{n}\sum_{i=1}^n Y_i^{(2)}\Bigg|\geq t\right]\big(1+o(1)\big),
$$
which completes the proof.
\end{proof}

Let us now present the proof of Theorem \ref{thm:lower bound}. By establishing an exponential equivalence, we show that under certain conditions the condition in the event in the previous lemma which comes from the $Y_i^{(1)}$ random variables is the dominating one. Combining this with an application of Petrov's moderate deviations result \cite{P1975} (see Theorem \ref{lem:MDPVector} above) to this first component, we obtain a lower bound. As explained before, the result shows that in the range $\frac{1}{n^{1/4}}\ll t_n\ll 1$ the asymptotic upper bound on the probability of concentration from Theorem \ref{thm:concentration thin-shell orlicz} is actually sharp.

\begin{proof}[Proof of Theorem \ref{thm:lower bound}]
First of all notice that for every two sequences $(r_n)_{n\in\N},(t_n)_{n\in\N}\in(0,\infty)^\N$  as in the statement we have that, calling $s_n:=t_n\sqrt{n}$ and $v_n:=\frac{r_n}{\sqrt{n}}$, for any $\varepsilon\in(0,1)$,
\begin{eqnarray*}
\Pro\left[-1\leq\frac{1}{r_n}\sum_{i=1}^nY_i^{(1)}\leq0,\Bigg|\frac{1}{n}\sum_{i=1}^n Y_i^{(2)}\Bigg|\geq t_n\right]&\geq&\Pro\left[-1\leq\frac{1}{v_n\sqrt{n}}\sum_{i=1}^nY_i^{(1)}\leq-\varepsilon,\Bigg|\frac{1}{s_n\sqrt{n}}\sum_{i=1}^n Y_i^{(2)}\Bigg|\geq 1\right]\cr
&=&\Pro\left[\left(\frac{1}{v_n\sqrt{n}}\sum_{i=1}^nY_i^{(1)},\frac{1}{s_n\sqrt{n}}\sum_{i=1}^nY_i^{(2)}\right)\in A_\varepsilon\right],
\end{eqnarray*}
where
$$
A_\varepsilon=\big\{(x,y)\in\R^2\,:\,x\in[-1,-\varepsilon],\,|y|\geq 1\big\}.
$$

Now we observe that
$$
\E\big[e^{uY_1^{(1)}}\big]=\frac{\int_{\R}e^{u(M(x)-R)}e^{\alpha_*M(x)}dx}{\int_{\R} e^{\alpha_*M(x)}dx}<\infty
$$
for every $u$ in a neighborhood of $0$. Therefore, by Theorem \ref{lem:MDPVector}, the sequence of random variables $\left(\frac{1}{v_n\sqrt{n}}\sum_{i=1}^nY_i^{(1)}\right)_{n\in\N}$ satisfies an LDP on $\R$ with speed $v_n^2$ and rate function $J:\R\to[0,\infty)$, $J(x):=\frac{x^2}{2\Var(M(Z)^2)}$. In particular, the sequence of random vectors $\left(\frac{1}{v_n\sqrt{n}}\sum_{i=1}^nY_i^{(1)},0\right)_{n\in\N}$ satisfies an LDP on $\R^2$ with speed $v_n^2$ and rate function $I:\R^2\to[0,\infty]$ given by
$$
I(x,y):=\begin{cases} \frac{x^2}{2\Var(M(Z)^2)}&:\, y=0\cr\infty &\textrm{ otherwise.}\end{cases}
$$

On the one hand, since $M(x)=\Omega(x^2)$ as $x\to\infty$, we have that
$$
\E\big[e^{uY_1^{(2)}}\big]=\frac{\int_{\R}e^{u(x^2-L_z^2)}e^{\alpha_*M(x)}dx}{\int_{\R} e^{\alpha_*M(x)}dx}<\infty
$$
for every $u$ in a neighborhood of $0$. Therefore, since $ 1\ll s_n=t_n\sqrt{n}\ll\sqrt{n}$, we obtain by means of Theorem \ref{lem:MDPVector} that, for every $\delta\in(0,\infty)$,
$$
\Pro\left[\Bigg|\frac{1}{s_n\sqrt{n}}\sum_{i=1}^nY_i^{(2)}\Bigg|>\delta\right]\leq e^{-\left(\frac{\delta^2}{2\Var(Z^2)}+o(1)\right)s_n^2}=e^{-\left(\frac{\delta^2}{2\Var(Z^2)}+o(1)\right)t_n^2n}.
$$
Thus, for every $\delta\in(0,\infty)$,
\begin{eqnarray*}
\limsup_{n\to\infty}\frac{\log\Pro\left[\left|\frac{1}{s_n\sqrt{n}}\sum_{i=1}^nY_i^{(2)}\right|>\delta\right]}{v_n^2}&\leq&\limsup_{n\to\infty}\frac{-\left(\frac{\delta^2}{2\Var(Z^2)}+o(1)\right)t_n^2n}{v_n^2}\cr
&=&-\left(\frac{\delta^2}{2\Var(Z^2)}+o(1)\right)\limsup_{n\to\infty}\frac{t_n^2n^2}{r_n^2}=-\infty,
\end{eqnarray*}
since $r_n/n\ll t_n$.
In view of Lemma \ref{lem:exponentially equivalent}, we see that the sequence $\left(\frac{1}{v_n\sqrt{n}}\sum_{i=1}^nY_i^{(1)},\frac{1}{s_n\sqrt{n}}\sum_{i=1}^nY_i^{(2)}\right)_{n\in\N}$ is exponentially equivalent to the sequence of random vectors $\left(\frac{1}{v_n\sqrt{n}}\sum_{i=1}^nY_i^{(1)},0\right)_{n\in\N}$ with speed $v_n^2$, and so
\[
\left(\frac{1}{v_n\sqrt{n}}\sum_{i=1}^nY_i^{(1)},\frac{1}{s_n\sqrt{n}}\sum_{i=1}^nY_i^{(2)}\right)_{n\in\N}
\]
satisfies an LDP (which on this scale is an MDP) with the same speed and the same rate function and, for every $\varepsilon\in(0,1)$,
\[
\Pro\left[\left(\frac{1}{v_n\sqrt{n}}\sum_{i=1}^nY_i^{(1)},\frac{1}{s_n\sqrt{n}}\sum_{i=1}^nY_i^{(2)}\right)\in A_\varepsilon\right]\geq e^{-\left(\frac{\varepsilon^2}{2\Var(M(Z)^2)}+o(1)\right)v_n^2}=e^{-\left(\frac{\varepsilon^2}{2\Var(M(Z)^2)}+o(1)\right)\frac{r_n}{n}r_n}.
\]
By Lemma \ref{lem:LowerBoundAnyt}, since $\frac{r_n}{n}\ll1$, we have that
\begin{eqnarray*}
\Pro\left[\Bigg|\frac{\Vert X_n\Vert_2^2}{n}-L_Z^2\Bigg|\geq t_n\right]&\geq&|\alpha_*|\sqrt{2\pi n\varphi^{\prime\prime}(\alpha_*)} e^{\alpha_*r_n}e^{-\left(\frac{\varepsilon^2}{2\Var(M(Z)^2)}+o(1)\right)\frac{r_n}{n}r_n}\big(1+o(1)\big)\cr
&=&|\alpha_*|\sqrt{2\pi n\varphi^{\prime\prime}(\alpha_*)} e^{\alpha_*r_n\big(1+o(1)\big)}\big(1+o(1)\big)\cr
&=&|\alpha_*|\sqrt{2\pi n\varphi^{\prime\prime}(\alpha_*)}e^{-r_n\big(-\alpha_*+o(1)\big)}\big(1+o(1)\big).
\end{eqnarray*}
The second part of the statement follows by setting $r_n:=t_n^2n$, which means that $t_n=\sqrt{\frac{r_n}{n}}$ and so the result holds for any $\frac{1}{n^{1/4}}\ll t_n\ll1$.
\end{proof}

\begin{rmk}\label{rem:discussion of conditions}
Let us say a few words about the conditions $\frac{1}{\sqrt{n}}\ll \frac{r_n}{n}\ll t_n \ll 1$ in Theorem \ref{thm:lower bound}. An inspection of the proof shows that the conditions $\frac{1}{\sqrt{n}}\ll \frac{r_n}{n} \ll 1$ are needed in order to ensure that  $\frac{1}{v_n\sqrt{n}}\sum_{i=1}^n Y_i^{(1)}$, where $v_n:=\frac{r_n}{\sqrt{n}}$, is on the scale of a moderate deviations principle, which then allows us to apply Petrov's result (Theorem \ref{lem:MDPVector}). The condition $\frac{r_n}{n}\ll t_n$ is related to Lemma \ref{lem:LowerBoundAnyt} in the following way: if we take $t_n\ll \frac{r_n}{n}$ and assume that $M(x)=x^2$, which means that $Y_i^{(1)}$ is equal to $Y_i^{(2)}$, then Lemma 6.1 gives $0$ as a trivial lower bound. This can be seen by looking at the term
\[
\E\left[\chi_{(-r_n,0]}\left(\sum_{i=1}^nY_i^{(2)}\right)\chi_{\R\setminus(-nt,nt)}\left(\sum_{i=1}^n Y_i^{(2)}\right)e^{-\alpha_*\sum_{i=1}^nY_i^{(2)}}\right],
\]
appearing in the proof of the lower bound, because then we integrate the exponential over the set $(-r_n,0]\cap \R\setminus(-nt_n,nt_n)=\emptyset$ as $n\to\infty$.
\end{rmk}

Using the Lemma \ref{lem:LowerBoundAnyt} in combination with the vector-version of the moderate deviation principle due to Petrov \cite{P1975} (see Theorem \ref{lem:MDPVector}), we obtain the following estimate which complements the one from Theorem \ref{thm:lower bound} in the sense that there is no wiggle room between $\frac{r_n}{n}$ and $t_n$, but now $\frac{r_n}{n}=t_n$.

\begin{thm}\label{lem:LowerEstimate}
Let $M:\R\to[0,\infty)$ be an Orlicz function such that $M\in\Omega(x^2)$ as $x\to\infty$, $n\in\N$, $R\in(0,\infty)$, and $\varphi_M:(-\infty,0)\to\R$ be the log-partition function with potential $M$, and let $Z$ be a random variable with Gibbs density $p_M$, i.e.,
$$
\varphi_M(\alpha)=\log \int_\R e^{\alpha M(x)}dx\quad\textrm{and}\quad p_M(x)=\frac{e^{\alpha_* M(x)}}{ \int_\R e^{\alpha_* M(x)}dx}.
$$
where $\alpha_*\in(-\infty,0)$ is chosen such that $\varphi_M^\prime(\alpha_*)=R$.
Assume that the covariance matrix of $(M(Z)-R, Z^2-L_Z^2)$ is invertible and that $X_n$ is a random vector uniformly distributed on $B_M^n(nR)$. Then, for every sequence $(t_n)_{n\in\N}\in(0,\infty)^\N$ such that $\frac{1}{\sqrt{n}}\ll t_n\ll1$, we obtain, as $n\to\infty$,
$$
\Pro\left[\Bigg|\frac{\Vert X_n\Vert_2^2}{n}-L_Z^2\Bigg|\geq t_n\right]\geq e^{-t_nn(-\alpha_*+o(1))},
$$
where
$$
L_Z^2:=\frac{\int_\R x^2e^{\alpha_* M(x)}dx}{\int_\R e^{\alpha_* M(x)}dx}.
$$
\end{thm}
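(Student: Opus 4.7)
The plan is to apply Lemma \ref{lem:LowerBoundAnyt} with the specific choice $r_n:=t_nn$ and then lower bound the remaining joint probability using the two-dimensional moderate deviation principle of Petrov (Theorem \ref{lem:MDPVector}) applied to the pair $(Y_i^{(1)},Y_i^{(2)})=(M(Z_i)-R,Z_i^2-L_Z^2)$. With $s_n:=t_n\sqrt{n}$, the hypothesis $\frac{1}{\sqrt{n}}\ll t_n\ll1$ translates to $1\ll s_n\ll\sqrt{n}$, which is exactly the moderate deviations scale. After normalizing, we have $\frac{1}{r_n}\sum_{i=1}^nY_i^{(1)}=\frac{1}{s_n\sqrt{n}}\sum_{i=1}^nY_i^{(1)}$ and $\frac{1}{n}\sum_{i=1}^nY_i^{(2)}\geq t_n$ is equivalent to $\frac{1}{s_n\sqrt{n}}\sum_{i=1}^nY_i^{(2)}\geq 1$.

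Next, one checks that the vector MDP applies: since $M\in\Omega(x^2)$ as $x\to\infty$ and $\alpha_*<0$, a standard estimate shows that the joint moment generating function $\E\bigl[\exp(u_1Y_1^{(1)}+u_2Y_1^{(2)})\bigr]$ is finite in a neighborhood of the origin in $\R^2$, because the integrand decays like $\exp((u_2+\alpha_*C)x^2)$ for $|x|$ large and $u_2$ small. Combined with the assumed invertibility of the covariance matrix $\bC=\Cov(Y_1^{(1)},Y_1^{(2)})$, Theorem \ref{lem:MDPVector} gives an MDP for the sequence of random vectors $V_n:=\frac{1}{s_n\sqrt{n}}\sum_{i=1}^n(Y_i^{(1)},Y_i^{(2)})$ at speed $s_n^2=t_n^2n$ with good rate function $I(x,y)=\frac{1}{2}\langle(x,y),\bC^{-1}(x,y)\rangle$.

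Applying the LDP lower bound to the open set $A:=(-1,0)\times\bigl(\R\setminus[-1,1]\bigr)$, which is contained in the event considered in Lemma \ref{lem:LowerBoundAnyt}, yields
\begin{equation*}
\liminf_{n\to\infty}\frac{1}{s_n^2}\log\Pro\bigl[V_n\in A\bigr]\geq-\inf_{(x,y)\in A}I(x,y)=:-\kappa,
\end{equation*}
where $\kappa\in[0,\infty)$ is a finite constant (attained in the closure at a point with $|y|=1$). Consequently, the joint probability appearing in Lemma \ref{lem:LowerBoundAnyt} is at least $\exp(-s_n^2(\kappa+o(1)))=\exp(-t_n^2n(\kappa+o(1)))$. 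Plugging $r_n=t_nn$ and this estimate into Lemma \ref{lem:LowerBoundAnyt} gives
\begin{equation*}
\Pro\left[\Bigg|\frac{\|X_n\|_2^2}{n}-L_Z^2\Bigg|\geq t_n\right]\geq|\alpha_*|\sqrt{2\pi n\varphi^{\prime\prime}(\alpha_*)}\,e^{\alpha_*t_nn}\,e^{-t_n^2n(\kappa+o(1))}\bigl(1+o(1)\bigr).
\end{equation*}

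The conclusion now follows from the crucial observation that the moderate-deviation exponent is negligible on the scale $t_nn$: since $t_n\ll1$, we have $t_n^2n=t_n\cdot(t_nn)=o(t_nn)$, so the factor $e^{-t_n^2n(\kappa+o(1))}$ contributes only $e^{o(t_nn)}$. Similarly, the polynomial prefactor $|\alpha_*|\sqrt{2\pi n\varphi^{\prime\prime}(\alpha_*)}$ is $e^{O(\log n)}$, and $t_nn\gg\sqrt{n}\gg\log n$ forces this too into $e^{o(t_nn)}$. Absorbing everything into an $o(1)$ correction to $-\alpha_*$ in the exponent yields the stated lower bound $e^{-t_nn(-\alpha_*+o(1))}$. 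The main obstacle in carrying out this plan is the verification of the vector MDP and, in particular, ensuring that the set $A$ (whose boundary is touched by the event of interest) has nonempty interior so that the LDP lower bound produces a finite exponent $\kappa$; the precise value of $\kappa$ is irrelevant because it is asymptotically swallowed by the factor $t_n=o(1)$.
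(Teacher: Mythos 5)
Your proposal is correct and follows essentially the same route as the paper's own proof: invoke Lemma \ref{lem:LowerBoundAnyt} with $r_n=t_nn$, rewrite the joint event as $\frac{1}{s_n\sqrt{n}}\sum_{i=1}^n(Y_i^{(1)},Y_i^{(2)})\in A$ with $s_n=t_n\sqrt{n}$, apply Petrov's two-dimensional MDP (Theorem \ref{lem:MDPVector}) to get a factor $e^{-t_n^2n(\kappa+o(1))}$ with finite $\kappa$, and observe that $t_n^2n=o(t_nn)$ so that only the term $e^{\alpha_*r_n}$ survives in the exponent. Your use of the open subset $(-1,0)\times(\R\setminus[-1,1])$ for the LDP lower bound is, if anything, slightly more careful than the paper's direct appeal to the exact asymptotics.
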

\begin{proof}
Let us call $s_n:=t_n\sqrt{n}$ and $r_n:=s_n\sqrt{n}=t_n n$. Then $1\ll s_n\ll\sqrt{n}$ and
\begin{eqnarray*}
\Pro\left[-1\leq\frac{1}{r_n}\sum_{i=1}^nY_i^{(1)}\leq0,\Bigg|\frac{1}{n}\sum_{i=1}^n Y_i^{(2)}\Bigg|\geq t_n\right]&=&\Pro\left[-1\leq\frac{1}{s_n\sqrt{n}}\sum_{i=1}^nY_i^{(1)}\leq0,\Bigg|\frac{1}{s_n\sqrt{n}}\sum_{i=1}^n Y_i^{(2)}\Bigg|\geq 1\right]\cr
&=&\Pro\left[\frac{1}{s_n\sqrt{n}}\sum_{i=1}^n(Y_i^{(1)},Y_i^{(2)})\in A\right],
\end{eqnarray*}
where
$$
A=\big\{(x,y)\in\R^2\,:\,x\in[-1,0],\,|y|\geq 1\big\}.
$$
Since $M(x)=\Omega(x^2)$ as $x\to\infty$ we have that
$$
\E\big[e^{\big\langle u,(Y_1^{(1)}, Y_1^{(2)})\big\rangle}\big]=\frac{\int_{\R}e^{u_1(M(x)-R)+u_2(x^2-L_z^2)}e^{\alpha_*M(x)}dx}{\int_{\R} e^{\alpha_*M(x)}dx}<\infty
$$
for all $u=(u_1,u_2)$ in a neighborhood of $0$. Thus, as the covariance matrix $\bC$ of $(Y_1^{(1)}, Y_1^{(2)})=(M(Z)-R, Z^2-L_Z^2)$ is invertible, we have by Theorem \ref{lem:MDPVector} that
$$
\Pro\left[\frac{1}{s_n\sqrt{n}}\sum_{i=1}^n(Y_i^{(1)},Y_i^{(2)})\in A\right]=e^{-s_n^2\big(\inf_{(x,y)\in A}I(x,y)+o(1)\big)}=e^{-t_n^2n\big(\inf_{(x,y)\in A}I(x,y)+o(1)\big)},
$$
where
$$
I(x,y)=\langle (x,y), C^{-1}(x,y)\rangle.
$$
Notice that necessarily
$$
\inf_{(x,y)\in A}I(x,y)=\min_{{x\in[0,1]}\atop{y=\pm1}} I(x,y).
$$
By Lemma \ref{lem:LowerBoundAnyt}
\begin{eqnarray*}
\Pro\left[\Bigg|\frac{\Vert X_n\Vert_2^2}{n}-L_Z^2\Bigg|\geq t_n\right]&\geq& e^{\alpha_*r_n}e^{-t_n^2n\big(\min_{{x\in[0,1]}\atop{y=\pm1}}I(x,y)+o(1)\big)}=e^{-t_nn\big((\min_{{x\in[0,1]}\atop{y=\pm1}}I(x,y)+o(1))t_n-\alpha_*\big)}\cr
&=&e^{-t_nn(-\alpha_*+o(1))},
\end{eqnarray*}
which completes the proof.
\end{proof}

\section{The asymptotic value of the isotropic constant of Orlicz balls}\label{sec:IsotropicConstant}

We shall now present the proof of Theorem \ref{thm:isotropic constant orlicz} in which we determine the precise asymptotic value of the isotropic constant of Orlicz balls, i.e., if $M:\R\to[0,\infty)$ is an Orlicz function and $R\in(0,\infty)$, then we show that
$$
\lim_{n\to\infty}L_{B_M^n(nR)}=\frac{e^{\alpha_*R}}{\int_\R e^{\alpha_*M(x)}dx}\left(\frac{\int_\R x^2e^{\alpha_* M(x)}dx}{\int_\R e^{\alpha_* M(x)}dx}\right)^{1/2},
$$
where $\alpha_*\in(-\infty,0)$ is the unique element such that the log-partition function
$$
\varphi_M:(-\infty,0)\to\R,\qquad \varphi_M(\alpha)=\log \int_\R e^{\alpha M(x)}dx.
$$
verifies that $\varphi_M^\prime(\alpha_*)=R$.

The following lemma is essential for our proof. Once we have obtained the result the theorem will be an easy consequence by combining the lemma with the recently obtained formula for the asymptotic volume of Orlicz balls in \cite{KP2020}.

\begin{lemma}
Let $M:\R\to[0,\infty)$ be an Orlicz function, $n\in\N$, and $R\in(0,\infty)$. If $X_n$ is a random vector uniformly distributed on $B_M^n(nR)$, then
$$
\lim_{n\to\infty}\E\Bigg[\frac{\Vert X_n\Vert_2^2}{n}\Bigg]=L_Z^2,
$$
where
$$
L_Z^2:=\frac{\int_\R x^2e^{\alpha_* M(x)}dx}{\int_\R e^{\alpha_* M(x)}dx}.
$$
\end{lemma}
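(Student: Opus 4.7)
The plan is to deduce convergence of expectations from the in-probability concentration already established in Lemma \ref{lem: thin-shell concetration via chebyshev} via a standard layer-cake argument, combined with an optimized truncation. The key observation is that the Chebyshev-type tail bound from that lemma is integrable in $t$, so it actually controls the $L^1$-deviation, not merely the probability of deviation.

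First, by the triangle inequality (or Jensen applied to $|\cdot|$),
\[
\Bigg|\E\Bigg[\frac{\Vert X_n\Vert_2^2}{n}\Bigg]-L_Z^2\Bigg|\leq \E\Bigg[\Bigg|\frac{\Vert X_n\Vert_2^2}{n}-L_Z^2\Bigg|\Bigg],
\]
so it suffices to show that the right-hand side tends to $0$ as $n\to\infty$. I will apply the layer-cake representation to the non-negative random variable $W_n:=\big|\Vert X_n\Vert_2^2/n-L_Z^2\big|$, which yields
\[
\E[W_n]=\int_0^\infty \Pro[W_n>t]\,dt.
\]

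Next, I invoke Lemma \ref{lem: thin-shell concetration via chebyshev}, which provides a constant $D\in(0,\infty)$ depending only on $M$ and $R$ (namely $D=|\alpha_*|\Var[Y_1^{(2)}]\sqrt{2\pi\varphi_M^{\prime\prime}(\alpha_*)}$) such that, with an $o(1)$-sequence independent of $t$,
\[
\Pro[W_n\geq t]\leq \frac{D(1+o(1))}{t^2\sqrt{n}}\qquad\text{for every }t\in(0,\infty).
\]
I split the layer-cake integral at a threshold $\varepsilon_n\in(0,\infty)$ to be chosen shortly, using the trivial bound $\Pro[W_n\geq t]\leq 1$ on $(0,\varepsilon_n)$ and the displayed bound on $[\varepsilon_n,\infty)$:
\[
\E[W_n]\leq \varepsilon_n+\int_{\varepsilon_n}^\infty\frac{D(1+o(1))}{t^2\sqrt{n}}\,dt=\varepsilon_n+\frac{D(1+o(1))}{\varepsilon_n\sqrt{n}}.
\]
Choosing $\varepsilon_n=n^{-1/4}$ balances the two terms and gives $\E[W_n]\leq 2\sqrt{D(1+o(1))}\,n^{-1/4}=o(1)$, which is the desired conclusion.

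There is no substantial obstacle once Lemma \ref{lem: thin-shell concetration via chebyshev} is in hand: the only minor point to check is that the $o(1)$-sequence in the lemma is genuinely uniform in $t$, which is precisely the content of the parenthetical remark at the end of that statement. In particular, no growth hypothesis on $M$ beyond the bare Orlicz assumptions is needed here, consistent with the statement of Theorem \ref{thm:isotropic constant orlicz}.
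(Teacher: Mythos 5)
Your argument is correct, but it takes a genuinely different route from the paper's. The paper applies the layer-cake formula to $\Vert X_n\Vert_2^2/n$ itself, writing $\E[\Vert X_n\Vert_2^2/n]=\int_0^\infty 2t\,\Pro[\Vert X_n\Vert_2/\sqrt{n}\geq t]\,dt$ and splitting at $t=L_Z$: on $[0,L_Z)$ it uses Lemma \ref{lem: thin-shell concetration via chebyshev} and dominated convergence, while on $[L_Z,\infty)$ it needs an integrable dominating tail, which it manufactures from Borell's inequality after first showing $\E[\Vert X_n\Vert_2^2]\leq C(M,R)\,n$ via the boundedness of the isotropic constant of $1$-symmetric bodies and the volume asymptotics of \cite{KP2020}. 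You instead apply the layer-cake formula to the deviation $W_n=|\Vert X_n\Vert_2^2/n-L_Z^2|$, where the singularity of the Chebyshev bound sits at $t=0$ (handled by the trivial bound $\Pro\leq 1$ over a shrinking window) and the $t^{-2}$ decay is integrable at infinity; this bypasses Borell's inequality, log-concavity, and the volume asymptotics entirely, relies only on Lemma \ref{lem: thin-shell concetration via chebyshev} (whose $o(1)$ is indeed uniform in $t$, as you note), and even yields a quantitative rate $\E[W_n]=O(n^{-1/4})$. The only blemish is cosmetic: with $\varepsilon_n=n^{-1/4}$ the bound is $(1+D(1+o(1)))n^{-1/4}$ rather than $2\sqrt{D(1+o(1))}\,n^{-1/4}$ (the latter corresponds to the balanced choice $\varepsilon_n=\sqrt{D}\,n^{-1/4}$), which is immaterial for the conclusion.
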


\begin{proof}
We have that
\begin{eqnarray*}
\E\Bigg[\frac{\Vert X_n\Vert_2^2}{n}\Bigg]&=&\int_0^\infty 2t\,\Pro\left[\frac{\Vert X\Vert_2}{\sqrt{n}}\geq t\right]dt\cr
&=&\int_0^{L_Z}2t\,\Pro\left[\frac{\Vert X_n\Vert_2}{\sqrt{n}}\geq t\right]dt+\int_{L_Z}^\infty2t\,\Pro\left[\frac{\Vert X_n\Vert_2}{\sqrt{n}}\geq t\right]dt.
\end{eqnarray*}
Since for every $t\in[0, L_Z)$ we have, by Lemma \ref{lem: thin-shell concetration via chebyshev}, that
\begin{itemize}
\item $\displaystyle{\lim_{n\to\infty}\Pro\left[\frac{\Vert X_n\Vert_2}{\sqrt{n}}\geq t\right]\to1}$
\item $\Pro\left[\frac{\Vert X_n\Vert_2}{\sqrt{n}}\geq t\right]\leq 1$, which is integrable on $[0,L_z]$,
\end{itemize}
we obtain by the dominated convergence theorem that
$$
\lim_{n\to\infty}\int_0^{L_Z}2t\,\Pro\left[\frac{\Vert X_n\Vert_2}{\sqrt{n}}\geq t\right]dt=\int_0^{L_Z}2tdt=L_Z^2.
$$
On the other hand, since $\widetilde{B}_M^n(nR)=\frac{B_M^n(nR)}{\vol_n(B_M^n(nR))^{1/n}}$ is in isotropic position and its isotropic constant is bounded by an absolute constant (as it is a 1-symmetric convex body) and, since by \cite[Theorem A]{KP2020} (see Equation \eqref{eq:log-volume orlicz} above)
$$
\lim_{n\to\infty}\vol_n\big(B_M^n(nR)\big)^{1/n}=e^{\varphi(\alpha_*)-\alpha_* R}=\frac{\int_\R e^{\alpha_*M(x)}dx}{e^{\alpha_*R}},
$$
we have that there exists a constant $C(M,R)\in(0,\infty)$ and $n_0\in\N$ such that if $n\geq n_0$, then
$$
\E\big[\Vert X_n\Vert_2^2\big]=nL_{B_M^2(nR)}^2\vol_n\big(B_M^n(nR)\big)^{2/n}\leq C(M,R)n.
$$
As a consequence of Borell's inequality (see, e.g., \cite[Corollary 3.2.17]{IsotropicConvexBodies}), there exists an absolute constant $C\in(0,\infty)$ such that, for every $t\geq 0$,
$$
\Pro\left[\Vert X_n\Vert_2\geq C\, \E \left[\Vert X_n\Vert_2^2\right]^{1/2}t\right]\leq 2e^{-t^2}.
$$
Therefore, there exists some constant $c(M,R)\in(0,\infty)$ and $n_0\in\N$ such that if $n\geq n_0$ and $t \geq0$, then
$$
\Pro\left[\frac{\Vert X_n\Vert_2}{\sqrt{n}}\geq  t\right]\leq 2e^{-ct^2}.
$$
Thus, if $t> L_Z$, then
\begin{itemize}
\item $\lim_{n\to\infty}2t\,\Pro\left[\frac{\Vert X_n\Vert_2}{\sqrt{n}}\geq t\right]=0$
\item $2t\,\Pro\left[\frac{\Vert X_n\Vert_2}{\sqrt{n}}\geq t\right]\leq 4te^{-ct^2}$, which is integrable on $[L_Z,\infty)$
\end{itemize}
and, by the dominated convergence theorem,
$$
\lim_{n\to\infty}\int_{L_Z}^{\infty}2t\,\Pro\left[\frac{\Vert X_n\Vert_2}{\sqrt{n}}\geq t\right]dt=\int_{L_Z}^\infty0dt=0,
$$
which completes the proof.
\end{proof}

The proof of Theorem \ref{thm:isotropic constant orlicz} is now an easy consequence of the previous lemma and the asymptotic formula for the volume of Orlicz balls from \cite[Theorem A]{KP2020}.

\begin{proof}[Proof of Theorem \ref{thm:isotropic constant orlicz}]
Since $\widetilde{B}_M^n(nR)=\frac{B_M^n(nR)}{\vol_n(B_M^n(nR))^{1/n}}$ is in isotropic position, we have that
$$
L_{B_M^n(nR)}^2=\frac{1}{\vol_n(B_M^n(nR))^{2/n}}\E\Bigg[\frac{\Vert X_n\Vert_2^2}{n}\Bigg],
$$
where $X_n$ is a random vector uniformly distributed on $B_m^n(nR)$. Since by \cite[Theorem A]{KP2020} (see Equation \eqref{eq:log-volume orlicz} above), we have
$$
\lim_{n\to\infty}\vol_n\big(B_M^n(nR)\big)^{1/n}=e^{\varphi(\alpha_*)-\alpha_* R}=\frac{\int_\R e^{\alpha_*M(x)}dx}{e^{\alpha_*R}},
$$
we obtain the theorem.
\end{proof}

\section{The asymptotic thin-shell width concentration for $\ell_p^n$ balls}

In this section we are going to consider the particular case of the $\ell_p^n$ balls, in which $M(t)=|t|^p$, with $p\geq1$. In that case, for any $R\in(0,\infty)$,
$$
B_M^n(nR)=\Big\{x\in\R^n\,:\,\sum_{i=1}^n|x_i|^p\leq nR\Big\}=(nR)^{1/p}B_p^n
$$
and $\varphi_p:(-\infty,0)\to\R$ is defined as
$$
\varphi_p(\alpha)=\log \int_\R e^{\alpha|t|^p}dt=\log 2+\log\Gamma\left(1+\frac{1}{p}\right)-\frac{1}{p}\log(-\alpha).
$$
This means that
\[
\varphi_p^\prime(\alpha)=-\frac{1}{p\alpha},\,\,\, \quad\alpha_*=-\frac{1}{pR},\,\,\, \varphi_p^{\prime\prime}(\alpha)=\frac{1}{p\alpha^2},\,\,\,\text{and}\,\,\,\varphi_p^{\prime\prime}(\alpha_*)=pR^2.
\]
and the Gibbs density is given by
$$
p(x)=\frac{e^{-\frac{|t|^p}{pR}}dt}{\int_\R e^{-\frac{|t|^p}{pR}}dt}=\frac{e^{-\frac{|t|^p}{pR}}dt}{2(pR)^{1/p}\Gamma\left(1+\frac{1}{p}\right)}.
$$
Since for any $R\in(0,\infty)$ the Orlicz balls are merely dilations of the same convex body, from now on we are going to assume that $R=1$. In such case the random variable $Z$ with density $p(x)$ is called a $p$-generalized Gaussian and
\[
L_Z^2=\E [Z^2]=\frac{p^{2/p}\Gamma\left(1+\frac{3}{p}\right)}{3\Gamma\left(1+\frac{1}{p}\right)}\quad\text{and}\quad \Var[Z^2]=\frac{p^{4/p}\left(9\Gamma\left(1+\frac{5}{p}\right)\Gamma\left(1+\frac{1}{p}\right)-5\Gamma\left(1+\frac{3}{p}\right)^2\right)}{45\Gamma\left(1+\frac{1}{p}\right)^2}.
\]
Then if $X_n$ is a random vector uniformly distributed on $n^{1/p}B_p^n$, we have, for any $t\in(0,\infty)$,
$$
\Pro\left[\Bigg|\frac{\Vert X_n\Vert_2^2}{n}-L_Z^2\Bigg|\geq t\right]
\leq
\sqrt{\frac{2\pi n}{p}}\Pro\left[\Bigg|\frac{1}{n}\sum_{i=1}^n (Z_i^2-L_Z^2)\Bigg|\geq t\right]\big(1+o(1)\big).
$$

In the case of the $\ell_p^n$ balls, Theorem \ref{thm:concentration thin-shell orlicz} gives that if $X_n$ is uniformly distributed on $n^{1/p}B_p^n$ and  $p\geq 2$, then if $\frac{1}{\sqrt{n}}\ll t_n\ll1$, as $n\to\infty$,
$$
\Pro\left[\Bigg|\frac{\Vert X_n\Vert_2^2}{nL_Z^2}-1\Bigg|\geq t_n\right]\leq\sqrt{\frac{2\pi n}{p}}e^{-\frac{t_n^2nL_Z^4(1+o(1))}{2\Var(Z^2)}}\big(1+o(1)\big)
$$
and, together with Theorem \ref{thm:lower bound}, if $p\geq 2$ and $\frac{1}{n^{1/4}}\ll t_n\ll1$, then
$$
\sqrt{\frac{2\pi n}{p}}e^{-\frac{t_n^2nL_Z^4}{p}\big(1+o(1)\big)}(1+o(1))\leq\Pro\left[\Bigg|\frac{\Vert X_n\Vert_2^2}{nL_Z^2}-1\Bigg|\geq t_n\right]\leq\sqrt{\frac{2\pi n}{p}}e^{-\frac{t_n^2nL_Z^4(1+o(1))}{2\Var(Z^2)}}\big(1+o(1)\big).
$$

Theorem \ref{thm:thin-shell concetration lp balls 1<p<2}, which refers to the situation when $1\leq p<2$ will be a direct consequence of the following lemma.

\begin{lemma}\label{lem:MDP Z^2}
Let $1\leq p<2$ and $(Z_i)_{i\in\N}$ be a sequence of independent copies of a $p$-generalized Gaussian random variable with density
$$
p(x)=\frac{e^{-\frac{|x|^p}{p}}dx}{2p^\frac{1}{p}\Gamma\left(1+\frac{1}{p}\right)}.
$$
and let $$L_Z^2=\E [Z^2]=\frac{p^{\frac{2}{p}}\Gamma\left(1+\frac{3}{p}\right)}{3\Gamma\left(1+\frac{1}{p}\right)}.$$ Let $(s_n)_{n\in\N}\in(0,\infty)^\N$ be a sequence such that $1\ll s_n\ll n^{\frac{1}{2(4/p-1)}}$. Then the sequence of random variables
\[
\left(\frac{1}{s_n\sqrt{n}}\sum_{i=1}^n\big(Z_i^2-L_Z^2\big)\right)_{n\in\N}
\]
satisfies an LDP on $\R$ with speed $s_n^2$ and good rate function
\[
I:\R\to[0,\infty),\qquad I(x)= \frac{x^2}{2\Var[|Z_1|^2]}={x^2\over 2\Var [Z_1^2]}.
\]
\end{lemma}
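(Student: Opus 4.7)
\medskip

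The plan is to deduce the claim from the Eichelsbacher--L\"owe moderate deviation principle (Theorem \ref{lem:eicheslbacher-loewe}) applied to the i.i.d.\ centered random variables $Y_i := Z_i^2 - L_Z^2$. Note that the $p$-generalized Gaussian has all moments finite, so $\Var[Y_1] = \Var[Z_1^2] \in (0,\infty)$; moreover the upper bound $s_n \ll n^{p/(2(4-p))}$ in the hypothesis satisfies $p/(2(4-p)) < 1/2$ for $p<2$, so in particular $1 \ll s_n \ll \sqrt{n}$ as required by Theorem \ref{lem:eicheslbacher-loewe}. All that remains is to verify the Nagaev-type tail condition
\[
\lim_{n\to\infty}\frac{1}{s_n^2}\log\bigl(n\,\Pro[|Y_1|>\sqrt{n}\,s_n]\bigr)=-\infty.
\]

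To this end, I would first obtain the tail asymptotics of $Z_1^2$ directly from the density formula. Since
\[
\Pro[Z_1>u] \;=\; \frac{1}{2p^{1/p}\Gamma(1+1/p)}\int_u^\infty e^{-x^p/p}\,dx,
\]
a standard Laplace-type integration by parts gives, as $u\to\infty$,
\[
\Pro[Z_1>u] \;\leq\; C_p\,u^{1-p}\,e^{-u^p/p},
\]
and hence $\Pro[Z_1^2>v] \leq 2C_p\,v^{(1-p)/2}e^{-v^{p/2}/p}$ for large $v$. Because $|Y_1|>v$ implies $Z_1^2>v$ once $v\geq 2L_Z^2$, the same type of bound holds for $|Y_1|$ with possibly a different constant.

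Substituting $v=\sqrt{n}\,s_n$ (which tends to infinity since $s_n\to\infty$) and taking logarithms,
\[
\log\bigl(n\,\Pro[|Y_1|>\sqrt{n}\,s_n]\bigr) \;\leq\; \log n + O(\log(\sqrt{n}\,s_n)) - \frac{(\sqrt{n}\,s_n)^{p/2}}{p}\bigl(1+o(1)\bigr).
\]
Dividing by $s_n^2$, the logarithmic terms are negligible (since $s_n\to\infty$) and the leading contribution is $-\frac{1}{p}\,n^{p/4}\,s_n^{p/2-2}$. As $p<2$, we have $p/2-2<0$, so this term tends to $-\infty$ if and only if $s_n^{2-p/2}\ll n^{p/4}$, which rearranges to $s_n \ll n^{p/(2(4-p))} = n^{1/(2(4/p-1))}$; this is precisely the hypothesis of the lemma.

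With the tail condition verified, Theorem \ref{lem:eicheslbacher-loewe} immediately yields the moderate deviation principle with speed $s_n^2$ and good rate function $I(x)=x^2/(2\Var[Z_1^2])$, as claimed. The only delicate step is the tail estimate for $Z_1^2$; it is routine Laplace asymptotics, but one has to keep the polynomial prefactor explicit long enough to see that it is absorbed by dividing by $s_n^2\to\infty$, and that the critical exponent $p/(2(4-p))$ arising from the computation matches the upper bound imposed on $s_n$.
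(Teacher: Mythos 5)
Your proposal is correct and follows essentially the same route as the paper: verify the Nagaev-type tail condition of Theorem \ref{lem:eicheslbacher-loewe} for $Y_1=Z_1^2-L_Z^2$ via the standard bound $\int_a^\infty e^{-y^p/p}\,dy\le a^{1-p}e^{-a^p/p}$, and observe that $n^{p/4}s_n^{p/2-2}\to\infty$ exactly under the hypothesis $s_n\ll n^{1/(2(4/p-1))}$. One small imprecision: the term $\frac{\log n}{s_n^2}$ is not negligible merely because $s_n\to\infty$; the correct justification (as in the paper, where the $\log n$ is absorbed into the constant $c_p$) is that $\log n=o\bigl(n^{p/4}s_n^{p/2}\bigr)$, so the exponential main term dominates.
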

\begin{proof}
Clearly, $\E[Z_1^2-L_Z^2]=0$ and $\Var[Z_1^2]>0$. Thus, all that remains to check is condition \eqref{eq:limit condition eichelsbacher-loewe}. We first observe that (see, e.g., \cite[Lemma 4.2]{GKR2017})
\[
\int_a^\infty e^{-y^p/p} \,\dint y \leq \frac{1}{a^{p-1}}e^{-a^p/p},\qquad a\in(0,\infty).
\]
Using this tail estimate, we obtain (for sufficiently large $n\in\N$)
\begin{align*}
& \frac{1}{s_n^2} \log\Big(n\,\Pro\big[\big|Z_1^2-L_Z^2\big| > \sqrt{n}s_n\big]\Big) \\
&\qquad\qquad \leq \frac{1}{s_n^2} \log\Big(n\,\Pro\big[Z_1^q > \sqrt{n}s_n + L_Z^2\big] + n\,\Pro\big[-Z_1^2 > \sqrt{n}s_n - L_Z^2\big]\Big) \\
&\qquad\qquad = \frac{1}{s_n^2} \log\Big(n\,\Pro\big[Z_1^2 > \sqrt{n}s_n + L_Z^2\big]\Big) \\
&\qquad\qquad \leq  \frac{1}{s_n^2} \log\Big(2n\,\Pro\Big[Z_1 > \big(\sqrt{n}s_n + L_Z^2\big)^{1/2}\Big]\Big) \\
&\qquad\qquad \leq \frac{1}{s_n^2} \log\bigg(\frac{n}{p^{1/p}\Gamma(1+1/p)}\cdot\frac{1}{\big(\sqrt{n}s_n + L_Z^2\big)^{(p-1)/q}}e^{-\frac{1}{p}\,\big(\sqrt{n}s_n + L_Z^2\big)^{p/2}}\bigg)\\
&\qquad\qquad \leq \frac{1}{s_n^2} \log\bigg(\frac{n}{p^{1/p}\Gamma(1+1/p)}e^{-\frac{1}{p}\,\big(\sqrt{n}s_n + L_Z^2\big)^{p/2}}\bigg)\\
&\qquad\qquad = \frac{1}{s_n^2} \log\bigg(e^{-\frac{1}{p}\,\big(\sqrt{n}s_n + L_Z^2\big)^{p/2} + \log\frac{n}{p^{1/p}\Gamma(1+1/p)}}\bigg)\\
&\qquad\qquad \leq \frac{1}{s_n^2} \log\bigg(e^{-c_p\,\big(\sqrt{n}s_n + L_Z^2\big)^{p/2}} \bigg),
\end{align*}
where $c_p\in(0,\infty)$ is a suitable constant depending only on $p$. Therefore, we get
\[
 \frac{1}{s_n^2} \log\Big(n\,\Pro\big[\big|Z_1^2-L_Z^2\big| > \sqrt{n}s_n\big]\Big) \leq - \frac{c_p\,\big(\sqrt{n}s_n + L_Z^2\big)^{p/2}}{s_n^2} \leq - \frac{c_p\,\big(\sqrt{n}s_n\big)^{p/2}}{s_n^2} = - c_p\frac{n^{p/4}}{s_n^{2-p/2}} \to - \infty,
\]
as $n\to\infty$, where in the last step we used that $s_n=o\big(n^{\frac{1}{2(4/p-1)}}\big)$. This shows that
\[
\lim_{n\to\infty} \frac{1}{s_n^2} \log\Big(n\,\Pro\Big[\big|Z_1^2-L_Z^2\big| > \sqrt{n}s_n\Big]\Big) = -\infty
\]
and so the result is a direct consequence of Theorem \ref{lem:eicheslbacher-loewe}.
\end{proof}

We can now prove Theorem \ref{thm:thin-shell concetration lp balls 1<p<2}.
\begin{proof}[Proof of Theorem \ref{thm:thin-shell concetration lp balls 1<p<2}]
Let $1\leq p<2$, and $\frac{1}{\sqrt{n}}\ll t_n\ll\frac{1}{n^{\frac{4-2p}{2(4-p)}}}$. Calling $s_n= t_n\sqrt{n}$ we have that $1\ll s_n\ll n^{\frac{1}{2(4/p-1)}}$ and so, by Lemma \ref{lem:MDP Z^2}, calling $ Y_i^{(2)}=Z_i^2-L_Z^2$ we have that, as $n\to\infty$,
\begin{eqnarray*}
\Pro\left[\Bigg|\frac{1}{n}\sum_{i=1}^n Y_i^{(2)}\Bigg|\geq t_n\right]
&=&\Pro\left[\Bigg|\frac{1}{s_n\sqrt{n}}\sum_{i=1}^n Y_i^{(2)}\Bigg|\geq 1\right]=e^{-I(1)(1+o(1))s_n^2}
=e^{-\frac{t_n^2n(1+o(1))}{2\Var(Z^2)}},
\end{eqnarray*}
where $I(x)={x^2\over 2\Var [Z_1^2]}$. Consequently, if $1\leq p<2$ and $\frac{1}{\sqrt{n}}\ll t_n\ll \frac{1}{n^{\frac{4-2p}{2(4-p)}}}$, then by Lemma \ref{lem:WidthAnyt}, as $n\to\infty$,
$$
\Pro\left[\Bigg|\frac{\Vert X_n\Vert_2^2}{n}-L_Z^2\Bigg|\geq t_n\right]\leq\sqrt{\frac{2\pi n}{p}}e^{-\frac{t_n^2n(1+o(1))}{2\Var(Z^2)}}\big(1+o(1)\big).
$$
Thus, as $n\to\infty$,
$$
\Pro\left[\Bigg|\frac{\Vert X_n\Vert_2^2}{nL_Z^2}-1\Bigg|\geq t_n\right]\leq\sqrt{\frac{2\pi n}{p}}e^{-\frac{t_n^2nL_Z^4(1+o(1))}{2\Var(Z^2)}}\big(1+o(1)\big).
$$
The proof of the lower bound is the same as in Theorem \ref{thm:lower bound}, where  now we have the restriction $\frac{1}{\sqrt{n}}\ll\frac{r_n}{n}\ll t_n\ll \frac{1}{n^{\frac{4-2p}{2(4-p)}}}$ as $n\to\infty$ and then, choosing $r_n=t_n^2 n$, forces the restriction $\frac{4}{3}<p<2$ and $\frac{1}{n^{1/4}}\ll t_n\ll\frac{n^{\frac{3p-4}{4(4-p)}}}{n^{1/4}}$. This completes the proof.
\end{proof}

\begin{rmk}
If $p\geq 2$, by means of Cram\'er's theorem (see Theorem \ref{thm:Cramer}) we can also obtain that for every fixed $t\in(0,\infty)$
$$
\Pro\left[\Bigg|\frac{1}{n}\sum_{i=1}^n (Z_i^2-L_Z^2)\Bigg|\geq t\right]=e^{-\frac{1}{p}f(t)n\big(1+o(1)\big)},
$$
where $f(t)=\inf_{|s|\geq t}\Lambda^*(s)$ with $\Lambda^*$ being the Legendre transform of the function
$$
\Lambda(u)=\log \E \big[e^{u(Z^2-L_Z^2)}\big]=\frac{\int_R e^{u(x^2-L_Z^2)}e^{-\frac{|x|^p}{p}}dx}{2p^\frac{1}{p}\Gamma\left(1+\frac{1}{p}\right)},
$$
which is finite on a neighborhood of $0$ if $p\geq 2$. So it $p\geq 2$, then we obtain, for every fixed $t\in(0,\infty)$,
$$
\Pro\left[\Bigg|\frac{\Vert X_n\Vert_2^2}{nL_Z^2}-1\Bigg|\geq t\right]\leq\sqrt{\frac{2\pi n}{p}}e^{-\frac{1}{p}f(tL_Z^2)n\big(1+o(1)\big)}\big(1+o(1)\big).
$$
If $1\leq p<2$, then  we have that
the sequence of random variables $\frac{1}{n}\sum_{i=1}^nZ_i^2$ satisfies an LDP with speed $n^{p/2}$ and rate function
$$
I(x)=\begin{cases}\frac{1}{p}(x-L_Z^2)^{\frac{p}{2}}&:\,x\geq L_Z^2\cr\infty&\textrm{otherwise},
\end{cases}
$$
a fact that was proved in \cite[Proof of Theorem 1.2]{APT2018}.
Hence, for every fixed $t\in(0,\infty)$,
$$
\Pro\left[\Bigg|\frac{1}{n}\sum_{i=1}^n (Z_i^2-L_Z^2)\Bigg|\geq t\right]=e^{-\frac{1}{p}t^\frac{p}{2}n^\frac{p}{2}\big(1+o(1)\big)}.
$$
Therefore, for every fixed $t\in(0,\infty)$,
$$
\Pro\left[\Bigg|\frac{\Vert X_n\Vert_2^2}{nL_Z^2}-1\Bigg|\geq t\right]\leq\sqrt{\frac{2\pi n}{p}}e^{-\frac{1}{p}t^\frac{p}{2}n^\frac{p}{2}L_Z^p\big(1+o(1)\big)}\big(1+o(1)\big).
$$
\end{rmk}

\subsection*{Acknowledgement}
JP is supported by the Austrian Science Fund (FWF) Project P32405 \textit{Asymptotic geometric analysis and applications} and Austrian Science Fund  (FWF)  Project  F5513-N26, which is a part of a Special
Research Program.
DA-G is supported by MICINN Project PID-105979-GB-I00 and DGA Project E48\_20R.

\bibliographystyle{plain}
\bibliography{thinshell}

\bigskip
\bigskip
	
	\medskip
	
	\small

	\noindent \textsc{David Alonso-Guti\'errez:} \'Area de an\'alisis matem\'atico, Departamento de matem\'aticas, Facultad de Ciencias, Universidad de Zaragoza, Pedro Cerbuna 12, 50009 Zaragoza (Spain), IUMA
		
	\noindent
		{\it E-mail:} \texttt{alonsod@unizar.es}
	
		\medskip
	
	\noindent \textsc{Joscha Prochno:} Institute of Mathematics and Scientific Computing,
	University of Graz, Heinrichstrasse 36, 8010 Graz, Austria
	
	\noindent
	{\it E-mail:} \texttt{joscha.prochno@uni-graz.at}

\end{document}